\documentclass[12pt, a4paper]{amsart}

\usepackage{amsfonts,amsmath,amssymb,amscd}  \usepackage[all]{xy}

\def\N{{\Bbb N}}
\def\Z{{\Bbb Z}}

\def\F{{\Bbb F}}
\def\G{{\Bbb G}}
 
\def\Im{{\mathrm{Im}}}

\def\Ker{{\mathrm{Ker}}}

\def\Im{\mathrm{Im}}

\def\Spec{{\mathrm{Spec}}}

\newtheorem{theorem}{Theorem}[section]

\newtheorem{prop}[theorem]{Proposition}
\newtheorem{corollary}[theorem]{Corollary}

\theoremstyle{definition}
\newtheorem{definition}[theorem]{Definition}
\newtheorem{rem}[theorem]{Remark}

\newtheorem{notation}[theorem]{Notation}
\newtheorem{example}[theorem]{Example}
\usepackage{color}

\newcommand\pf{\begin{proof}}
\newcommand\epf{\end{proof}}

\usepackage{mathrsfs}


\hyphenation{comod-ule}

\title[]
{On the unit group scheme of the group algebra of a certain non-commutative finite flat group scheme over an $\Bbb{F}_{p}$-algebra\\
}

\author[Yuji~Tsuno]{Yuji~Tsuno}
\address{Yuji Tsuno: National Institute of Technology, Wakayama College,
77 Noshima, Nada-cho, Gobo, Wakayama, Japan 644-0023}
\email{tsuno@wakayama-nct.ac.jp}

\begin{document} 

\begin{abstract}
Suwa investigated the unit group scheme of the group ring associated with a finite flat group scheme and provided a characterization of torsors possessing the normal base property for such schemes. In this paper, we examine the unit group scheme of the group ring for a specific non-commutative finite flat group scheme and characterize torsors with the normal base property in this context. Moreover, in connection with the Noether problem for Hopf algebras proposed by Kassel and Masuoka, we compute the quotient of the unit group scheme under the action of this non-commutative finite flat group scheme.
\end{abstract}

\maketitle

\noindent

\medskip

\section{Introduction}\label{sec:intro}
\renewcommand{\thefootnote}{} 
\footnote[0]{\sc Mathematics Subject Classification (2020). Primaly 13B05; Secondary 14L15, 12G05\\
 \ \ \  Keywords and phrases. Finite flat group scheme, normal basis problem, Hopf-Galois extension, cleft extension
}
\renewcommand{\thefootnote}{\arabic{footnote}} 

An elementary proof of Kummer theory using Lagrange resolvents is well known. Building on the normal basis theorem in field theory, Serre \cite{Se} reformulated this approach as follows:

Let $\varGamma$ be a finite group, $k$ a field, and $U(\varGamma_{k})$ the algebraic group representing the unit group of the group algebra $k[\varGamma]$. Then any Galois extension $K/k$ with Galois group $\varGamma$ can be obtained from the Cartesian diagram 
\[\begin{CD}
 \Spec \ K  @>>> U(\varGamma_{k}) \\
 @VVV  @VVV \\
 \Spec \ k @>>> U(\varGamma_{k})/\varGamma\ .
\end{CD}\]

\vspace{3mm}

Moreover, Serre presented another proof of both Kummer theory and Artin-Schreier-Witt theory by constructing the following short exact sequences:

\[\begin{CD}
 0 @>>>\boldsymbol{\mu_{n,k}} @>>> U(\boldsymbol{\mu_{n,k}}) @>>> U(\boldsymbol{\mu_{n,k}})/\boldsymbol{\mu_{n,k}} @>>> 0 \\
 @.  @\vert  @VVV  @VVV  @. \\
 0 @>>> \boldsymbol{\mu_{n,k}} @>>> \G_{m,k} @>n>> \G_{m,k} @>>> 0
\end{CD}\]

($k$ contains all the $n$-th roots of unity and $n$ is invertible in $k$)
and
\[\begin{CD}
 0 @>>>  \Z/p^{n}\Z @>>> U(\Z/p^{n}\Z) @>>> U(\Z/p^{n}\Z)_{k}/\Z/p^{n}\Z@>>> 0 \\
 @.  @\vert  @VVV  @VVV  @. \\
 0 @>>>  \Z/p^{n}\Z @>>> W_{n,k} @>{F-1}>> W_{n,k} @>>> 0
\end{CD}\]

($k$ is of characteristic $p$).

Subsequently, Suwa \cite{Su1}, \cite{Su2} reformulated Serre's method as the sculpture problem and reconstructed it by incorporating the embedding problem. Furthermore, Suwa's reformulation of Serre's method can be extended to finite flat group schemes. To achieve such an extension, it was necessary to reinterpret the concept of Hopf-Galois extensions with the normal basis property (i.e., cleft extensions) within the framework of algebraic geometry, as follows:

\vspace{3mm}

\begin{definition}[{\cite[Definition 2.9]{Tsu2}}]
Let $S$ be a scheme, $\varGamma$ an affine group $S$-scheme and $X$ a right $\varGamma$-torsor over $S$. We say that a right $\varGamma$-torsor $X$ is cleft if there exists an isomorphism of ${\mathcal O}_S$-modules $\varphi:{\mathcal O}_{\varGamma}\overset{\sim}\rightarrow {\mathcal O}_X$ such that the following diagram commutes:
\[\begin{CD}
 {\mathcal O}_{\varGamma} @>{\varphi}>> {\mathcal O}_X \\
 @VV\Delta V  @VV\rho V \\
 {\mathcal O}_{\varGamma}\otimes_{{\mathcal O}_S} {\mathcal O}_{\varGamma} @>{\varphi \otimes Id}>> {\mathcal O}_X \otimes_{{\mathcal O}_S} {\mathcal O}_{\varGamma}\ .
\end{CD}\]
is commutative. Here $\Delta$ denotes the comultiplication of ${\mathcal O}_S$-Hopf algebra ${\mathcal O}_{\varGamma}$ and $\rho$ the right ${\mathcal O}_{\varGamma}$-comodule algebra structure homomorphism of ${\mathcal O}_S$-algebra ${\mathcal O}_X$. 
\end{definition}

Let $S$ be a scheme and $\varGamma$ an affine $S$-group scheme such that ${\mathcal O}_{\varGamma}$ is a locally free ${\mathcal O}_{S}$-module of finite rank. We can then consider the unit group scheme $U(\varGamma)$ associated with the group algebra of $\varGamma$. Moreover, via the canonical closed embedding  $i: \varGamma \rightarrow U(\varGamma)$, we obtain an exact sequence of schemes over $S$ with values in pointed sets:
\[1 \longrightarrow \varGamma \overset{i}\longrightarrow U(\varGamma) \longrightarrow U(\varGamma)/\varGamma \longrightarrow 1.\]
If $\varGamma$ is commutative, this exact sequence is called Grothendieck resolution (cf. [6. Sec 6]).
Furthermore, Suwa showed that $U(\varGamma)$ is a right $\varGamma$-cleft torsor over $U(\varGamma)/\varGamma$ and provided the following characterization of cleft torsors under a finite flat group scheme:

\vspace{3mm}
\begin{theorem}[{\cite{Su3}, \cite{Tsu2}}]
Let $S$ be a scheme and $\varGamma$ an affine $S$-group scheme such that ${\mathcal O}_{\varGamma}$ is a locally free ${\mathcal O}_{S}$-module of finite rank. Then, a $\varGamma$-torsor $X$ over $S$ is cleft if and only if there exists a Cartesian diagram

\[\begin{CD}
 X  @>>> U(\varGamma)\\
 @VVV  @VVV \\
 S @>>> U(\varGamma)/\varGamma\ .
\end{CD}\]

\end{theorem}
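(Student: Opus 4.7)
My plan is to treat the two implications separately: the ``if'' direction is essentially formal, while the ``only if'' direction requires constructing the classifying morphism $S \to U(\varGamma)/\varGamma$ out of the cleaving.

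For the ``if'' direction I would invoke the fact recalled just before the theorem that $U(\varGamma)$ is itself a cleft right $\varGamma$-torsor over $U(\varGamma)/\varGamma$, with some canonical cleaving
$\varphi_{0}\colon \mathcal{O}_{\varGamma}\otimes_{\mathcal{O}_{S}}\mathcal{O}_{U(\varGamma)/\varGamma} \xrightarrow{\sim} \mathcal{O}_{U(\varGamma)}$.
Because the comultiplication $\Delta$, the coaction, and the torsor structure all commute with base change, pulling $\varphi_{0}$ back along the given morphism $S \to U(\varGamma)/\varGamma$ yields an $\mathcal{O}_{S}$-module isomorphism $\mathcal{O}_{\varGamma} \xrightarrow{\sim} \mathcal{O}_{X}$ intertwining $\Delta$ with the induced coaction on $\mathcal{O}_{X}$, which is exactly a cleaving of $X$.

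For the ``only if'' direction I would start with a cleft torsor $(X,\varphi)$ and manufacture a morphism $f \colon S \to U(\varGamma)/\varGamma$ whose pullback of $U(\varGamma)$ recovers $X$. The crucial step is to reinterpret the cleaving $\varphi \colon \mathcal{O}_{\varGamma} \to \mathcal{O}_{X}$ as an $X$-valued point of $U(\varGamma)$. By local freeness of $\mathcal{O}_{\varGamma}$, the map $\varphi$ can be identified with a global section of the group algebra $\mathcal{O}_{\varGamma}^{\vee} \otimes_{\mathcal{O}_{S}} \mathcal{O}_{X}$; combining the comodule compatibility $\rho \circ \varphi = (\varphi \otimes \mathrm{Id})\circ \Delta$ with the torsor trivialisation $\mathcal{O}_{X} \otimes_{\mathcal{O}_{S}} \mathcal{O}_{X} \xrightarrow{\sim} \mathcal{O}_{X} \otimes_{\mathcal{O}_{S}} \mathcal{O}_{\varGamma}$ forces this section to be convolution-invertible, hence a unit in the group algebra of $\varGamma_{X}$. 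This yields a morphism $\tilde\varphi \colon X \to U(\varGamma)$.

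To finish, I would check that the composition $X \xrightarrow{\tilde\varphi} U(\varGamma) \to U(\varGamma)/\varGamma$ is constant on the fibres of $X \to S$: under the right $\varGamma$-action on $X$, two cleavings of the same fibre differ by right multiplication by a group-like element of $U(\varGamma)$, and so map to the same class in the quotient. Faithfully flat descent along the $\varGamma$-torsor $X \to S$ then supplies a unique $f \colon S \to U(\varGamma)/\varGamma$, and the resulting square is Cartesian because the canonical comparison map $X \to S \times_{U(\varGamma)/\varGamma} U(\varGamma)$ is by construction a morphism of $\varGamma$-torsors over $S$, hence an isomorphism. I expect the main obstacle to lie in the verification that the section of $\mathcal{O}_{\varGamma}^{\vee} \otimes_{\mathcal{O}_{S}} \mathcal{O}_{X}$ attached to $\varphi$ is genuinely a unit and that $\tilde\varphi$ descends correctly; this is a Hopf-algebraic bookkeeping in which the conventions for the convolution product and for the torsor trivialisation have to be tracked with care.
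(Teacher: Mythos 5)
The paper does not prove this theorem; it is quoted from Suwa \cite{Su3} and Tsuno \cite{Tsu2}, so the comparison can only be with the standard argument in those references --- and your outline is essentially that argument: base-change the canonical cleft structure on $U(\varGamma)\to U(\varGamma)/\varGamma$ for the ``if'' direction, and for the ``only if'' direction view the cleaving $\varphi\in\Hom_{\mathcal{O}_S}(\mathcal{O}_{\varGamma},\mathcal{O}_X)=A(\varGamma)(X)$ as an $X$-point of $U(\varGamma)$, check $\varGamma$-equivariance of $\tilde\varphi$ (which is exactly the identity $\rho\circ\varphi=(\varphi\otimes\mathrm{Id})\circ\Delta$ read on points), descend to $S\to U(\varGamma)/\varGamma$, and conclude by the fact that a morphism of $\varGamma$-torsors over $S$ is an isomorphism.

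The one place where your plan asserts rather than proves is the claim that $\varphi$ is convolution-invertible, i.e.\ that it actually lands in $U(\varGamma)(X)\subset A(\varGamma)(X)$. This is not formal bookkeeping: it is precisely the Doi--Takeuchi/Kreimer--Takeuchi theorem that a Hopf--Galois extension with the normal basis property is cleft (the result the paper later uses as \cite[Theorem~9]{DT}). You have correctly located this as the main obstacle, but a complete proof must either cite that theorem or reprove it (e.g.\ by producing $\varphi^{-1}$ from the inverse of the Galois map $\mathcal{O}_X\otimes_{\mathcal{O}_S}\mathcal{O}_X\xrightarrow{\sim}\mathcal{O}_X\otimes_{\mathcal{O}_S}\mathcal{O}_{\varGamma}$ composed with a suitable counit/antipode manipulation); as written, ``forces this section to be convolution-invertible'' is the gap. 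Everything else in your plan (equivariance, descent, Cartesianness via the torsor-morphism-is-isomorphism principle) is correct and matches the cited proofs.
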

This theorem was established by the author of the present paper for the case where $\varGamma$ is commutative, and by Suwa for the general case where $\varGamma$ is not necessarily commutative.

\vspace{3mm}

With the notation of the above theorem, we obtain the following results.

\vspace{3mm}
Let $G$ be a flat affine group scheme over $S$.

 (A) Assume that $e: \varGamma \rightarrow G$ is a closed embedding of group schemes and that there exists a commutative diagram

 \[\begin{CD}
 \varGamma @>i>> U(\varGamma) \\
 @VVV  @VVV \\
 \varGamma @>e>> G.
\end{CD}\]
If a $\varGamma$-torsor over $S$ is cleft, then there exists a morphism $X \rightarrow G$ and $S \rightarrow G/\varGamma$ such that the diagram
\[\begin{CD}
 X @>>> G \\
 @VVV  @VVV \\
 S @>>> G/\varGamma
\end{CD}\]
is cartesian.

(B) Assume that $e: \varGamma \rightarrow G$ is a closed embedding of group schemes and that there exists a commutative diagram
 
 \[\begin{CD}
 \varGamma @>e>> G \\
 @VVV  @VVV \\
 \varGamma @>i>> U(\varGamma).
\end{CD}\]
Then, if a $\varGamma$-torsor X over $S$ is defined by a cartesian diagram
\[\begin{CD}
 X @>>> G \\
 @VVV  @VVV \\
 S @>>> G/\varGamma
\end{CD}\]
then $X$ is a cleft $\varGamma$-torsor.

We refer to problems
(A) and (B) as the sculpture problem and the embedding problem, respectively.

From the above discussion, we obtain the following corollary:

\vspace{3mm}
\begin{corollary} Under the notation of Theorem 1.2, let $G$ be a flat affine group scheme over $S$. Suppose there exist commutative diagrams
\[\begin{CD}
 \varGamma @>i>> U(\varGamma) \\
 @VV{\wr}V  @VVV \\
 \varGamma @>>> G
\end{CD}\] 
and
\[\begin{CD}
 \varGamma @>>> G \\
 @VV{\wr}V  @VVV \\
 \varGamma @>i>> U(\varGamma)\ ,
\end{CD}\]
where $\varGamma \rightarrow G$ is a closed embedding of group schemes and $i: \varGamma \rightarrow U(\varGamma)$ is the canonical closed embedding of group schemes. Then, a $\varGamma$-torsor $X$ over $S$ is cleft if and only if $X$ is defined by the Cartesian diagram
\[\begin{CD}
 X @>>> G \\
 @VVV  @VVV \\
 S @>>> G/\varGamma\ .
\end{CD}\]
\end{corollary}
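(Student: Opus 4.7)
The plan is to derive this corollary as a direct consequence of the two statements (A) and (B) of the preceding discussion; essentially no work beyond bookkeeping is required. Each direction of the biconditional will use exactly one of these two statements, matched with one of the two commutative diagrams furnished by the hypothesis.

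For the forward implication, I would begin with a cleft $\varGamma$-torsor $X$ over $S$. The first commutative square in the hypothesis is precisely the data required by (A): a closed embedding $\varGamma \to G$, together with a commutative square relating it to the canonical embedding $i: \varGamma \to U(\varGamma)$. Applying (A) then yields morphisms $X \to G$ and $S \to G/\varGamma$ fitting into a Cartesian square over the projection $G \to G/\varGamma$, which is the desired conclusion.

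For the reverse implication, I would suppose that $X$ is defined by such a Cartesian square over $G \to G/\varGamma$. The second commutative square in the hypothesis supplies exactly the compatibility required by (B), namely the closed embedding $\varGamma \to G$ together with a commutative square relating it to $i: \varGamma \to U(\varGamma)$. Statement (B) then concludes that $X$ is a cleft $\varGamma$-torsor.

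The one point requiring care---and the place I would identify as the main subtlety---is that in each hypothesized square the leftmost vertical arrow is an isomorphism of $\varGamma$ with itself rather than literally the identity. The statements (A) and (B) are insensitive to this, since they only ask for \emph{some} commutative square of the specified shape; still, to line things up on the nose one may absorb the isomorphism by pulling back the $\varGamma$-action on $X$, a step which preserves both cleftness and the Cartesian-square description. I therefore do not anticipate any genuine obstacle: the substantive content of the corollary lies not in its proof but in the identification of explicit non-commutative $\varGamma$ and $G$ for which both hypotheses can actually be verified, which is precisely the subject of the remainder of the paper.
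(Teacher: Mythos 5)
Your proposal is correct and matches the paper's own treatment: the corollary is stated there as an immediate consequence of the sculpture problem (A) and the embedding problem (B), with the two hypothesized squares feeding each direction exactly as you describe. No further comment is needed.
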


In this paper, building on the above framework, we investigate the case of a certain non-commutative finite flat group scheme. We describe the unit group scheme of its group algebra and analyze both the sculpture problem and the embedding problem.

Specifically, we focus on the following non-commutative group schemes:

\begin{definition} (=Definition 2.3) Let $p$ be a prime number, $R$ an $\F_{p}$-algebra and $\lambda \in R$.
The non-commutative group scheme ${\mathcal H}^{(\lambda)}_{R}$ is defined by 

\[{\mathcal H}^{(\lambda)}_{R}=\Spec\,R[X,Y, \frac{1}{1+\lambda X}]\]

with

(a) the comultiplication map:
\[X \mapsto X\otimes 1+(1+\lambda X)\otimes X, \ \ \ Y \mapsto Y\otimes 1+(1+\lambda X)\otimes Y,\]

(b) the counit map:
\[X\mapsto 0, \ \ \  Y\mapsto 0,\]

(c) the antipode:
\[X \mapsto \frac{-X}{1+\lambda X}, \ \ \ Y\mapsto \frac{-Y}{1+\lambda X}.\] 

\end{definition}

We define the Frobenius map $F: {\mathcal H}^{(\lambda)}_{R} \rightarrow {\mathcal H}^{(\lambda^{p})}_{R}$ defined by
\[X \mapsto X^{p}, Y \mapsto Y^{p}: R[X,Y, \frac{1}{1+\lambda^{p} X}] \rightarrow R[X,Y, \frac{1}{1+ \lambda X}].\]
Put $G^{(\lambda)}_{R}=\Ker[F:{\mathcal H}^{(\lambda)}_{R} \rightarrow {\mathcal H}^{(\lambda^{p})}_{R}]$. 
Then $G^{(\lambda)}_{R}$ is a non-commutative finite flat group scheme.

For this group scheme, we obtain the following results:

\begin{theorem}Under the notation of Definition 1.4, there exist commutative diagrams

\[\begin{CD}
 G^{(\lambda)}_{R} @>i>> U(G^{(\lambda)}_{R}) \\
 @\vert  @VV\tilde{\chi}V \\
 G^{(\lambda)}_{R} @>e>> {\mathcal H}^{(\lambda)}_{R}.
\end{CD} \ \ \  \begin{CD}
 G^{(\lambda)}_{R} @>e>> {\mathcal H}^{(\lambda)}_{R} \\
 @\vert  @VV\tilde{\sigma}V \\
 G^{(\lambda)}_{R} @>i>> U(G^{(\lambda)}_{R}).
\end{CD}\]

\end{theorem}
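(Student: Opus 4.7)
The plan is to construct $\tilde{\chi}$ and $\tilde{\sigma}$ explicitly on functor-of-points and then verify each diagram commutes by direct calculation. Two preliminaries are useful: first, $(1+\lambda X)\sum_{k=0}^{p-1}(-\lambda X)^{k}=1$ modulo $(X^{p},Y^{p})$, so $\mathcal{O}_{G^{(\lambda)}_R}=R[X,Y]/(X^{p},Y^{p})$ is a free $R$-module with basis $\{X^{i}Y^{j}\}_{0\le i,j<p}$; second, for each $R$-algebra $T$, $U(G^{(\lambda)}_R)(T)$ is the convolution-invertible part of $\Hom_{R}(\mathcal{O}_{G^{(\lambda)}_R},T)$, and the canonical embedding $i(x,y)$ sends $X^{i}Y^{j}\mapsto x^{i}y^{j}$ (an algebra morphism precisely because $x^{p}=y^{p}=0$ on $G^{(\lambda)}_R$).

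\textit{Construction of $\tilde{\sigma}$.} For $(x,y)\in\mathcal{H}^{(\lambda)}_R(T)$ I define $\tilde{\sigma}(x,y)\in\Hom_{R}(\mathcal{O}_{G^{(\lambda)}_R},T)$ to be the $T$-linear map $X^{i}Y^{j}\mapsto x^{i}y^{j}$. The key observation is that $\Delta_{\mathcal{H}}$ preserves the $R$-subspace of polynomials of bidegree $<p$: expanding
\[\Delta(X)^{i}\Delta(Y)^{j}=\sum_{k,\ell}\binom{i}{k}\binom{j}{\ell}X^{i-k}Y^{j-\ell}(1+\lambda X)^{k+\ell}\otimes X^{k}Y^{\ell},\]
both tensor factors have $X$-degree $\le i<p$ and $Y$-degree $\le j<p$. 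Thus $\Delta_{G^{(\lambda)}_R}$ is literally the restriction of $\Delta_{\mathcal{H}}$, and for $f\in\mathcal{O}_{G^{(\lambda)}_R}$ with $\Delta_{\mathcal{H}}(f)=\sum_{\alpha}f_{\alpha}\otimes g_{\alpha}$,
\[(\tilde{\sigma}(x,y)*\tilde{\sigma}(x',y'))(f)=\sum_{\alpha}f_{\alpha}(x,y)g_{\alpha}(x',y')=f((x,y)(x',y'))=\tilde{\sigma}((x,y)(x',y'))(f),\]
the middle equality being the defining functoriality of the group law on $\mathcal{H}^{(\lambda)}_R$. Since $\tilde{\sigma}(0,0)$ equals the counit $\epsilon$ (the convolution unit), $\tilde{\sigma}$ is a monoid morphism from a group and therefore lands in $U(G^{(\lambda)}_R)$. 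When $x^{p}=y^{p}=0$, the functional $\tilde{\sigma}(x,y)$ coincides with the algebra morphism $i(x,y)$, giving $\tilde{\sigma}\circ e=i$.

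\textit{Construction of $\tilde{\chi}$.} For $u\in U(G^{(\lambda)}_R)(T)$ set $\tilde{\chi}(u)=(u(X)/u(1),u(Y)/u(1))$. The elements $1$ and $1+\lambda X$ are grouplike in $\mathcal{O}_{G^{(\lambda)}_R}$ (the latter inherited from $\Delta(1+\lambda X)=(1+\lambda X)\otimes(1+\lambda X)$), so $u\mapsto u(1)$ and $u\mapsto u(1+\lambda X)=u(1)+\lambda u(X)$ are algebra morphisms from the convolution algebra to $T$ and in particular send units to $T^{*}$. Thus $u(1)\in T^{*}$ and $1+\lambda u(X)/u(1)=u(1+\lambda X)/u(1)\in T^{*}$, so $\tilde{\chi}(u)\in\mathcal{H}^{(\lambda)}_R(T)$. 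Writing $\bar{x}=u(X)/u(1)$, $\bar{x}'=v(X)/v(1)$, and analogously for $Y$, a direct computation using $\Delta(X)=X\otimes1+(1+\lambda X)\otimes X$ and $\Delta(Y)=Y\otimes1+(1+\lambda X)\otimes Y$ gives
\[\frac{(u*v)(X)}{(u*v)(1)}=\bar{x}+\bar{x}'+\lambda\bar{x}\bar{x}',\qquad\frac{(u*v)(Y)}{(u*v)(1)}=\bar{y}+(1+\lambda\bar{x})\bar{y}',\]
which is precisely the group law $(x,y)(x',y')=(x+x'+\lambda xx',y+(1+\lambda x)y')$ of $\mathcal{H}^{(\lambda)}_R$. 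Since $i(x,y)(1)=1$, $\tilde{\chi}(i(x,y))=(x,y)=e(x,y)$.

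The only substantive input is the degree-bound observation that $\Delta_{\mathcal{H}}$ preserves bidegree-$<p$ polynomials, which makes $\tilde{\sigma}$ well-defined as a section (and in fact yields $\tilde{\chi}\circ\tilde{\sigma}=\mathrm{id}_{\mathcal{H}^{(\lambda)}_R}$); the group-homomorphism verifications then unfold as direct applications of the group law on $\mathcal{H}^{(\lambda)}_R$, and the remaining work is routine combinatorial bookkeeping of the comultiplication formulas.
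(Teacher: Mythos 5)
The $\tilde{\chi}$ half of your argument is sound: the observation that $1$ and $1+\lambda X$ are grouplike, so that $u\mapsto u(1)$ and $u\mapsto u(1)+\lambda u(X)$ carry units to units, is exactly what is needed, and your normalization $Y\mapsto T_{Y}/T_{1}$ (rather than $T_{Y}$) is the one under which the group-law computation actually closes. The problem is in the $\tilde{\sigma}$ half, and it sits precisely at what you call "the only substantive input." Your claim that both tensor factors of $\Delta(X)^{i}\Delta(Y)^{j}$ have $X$-degree $\le i<p$ is false: the left-hand factor in your own display is $X^{i-k}Y^{j-\ell}(1+\lambda X)^{k+\ell}$, whose $X$-degree can be as large as $(i-k)+(k+\ell)=i+\ell\le i+j$. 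So whenever $i+j\ge p$ and $\lambda\ne 0$, the comultiplication of $\mathcal{H}^{(\lambda)}_{R}$ does \emph{not} preserve the bidegree-$<p$ subspace, and $\Delta_{G^{(\lambda)}_{R}}$ is a genuine truncation of $\Delta_{\mathcal{H}}$ rather than its restriction. This is exactly why Proposition 3.1 of the paper splits into the cases $r_{1}+r_{2}<p$ and $r_{1}+r_{2}\ge p$ and introduces the coefficient $C(r_{1},r_{2},k,l,k',l')$ that vanishes when $r_{1}-k+k'+l'\ge p$: those are the terms your argument silently discards.

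The gap is not cosmetic. Take $p=2$. Then
\[\Delta_{A}(XY)=XY\otimes 1+X\otimes Y+Y\otimes X+\lambda XY\otimes X+1\otimes XY,\]
the terms $\lambda X^{2}\otimes Y$ and $\lambda^{2}X^{2}\otimes XY$ of $\Delta_{\mathcal{H}}(XY)$ having been killed by $X^{2}=0$. With $u=\tilde{\sigma}(x,y)$ and $v=\tilde{\sigma}(x',y')$ one gets $(u*v)(XY)=xy+xy'+x'y+x'y'+\lambda xx'y$, whereas
\[\tilde{\sigma}\bigl((x,y)(x',y')\bigr)(XY)=(x+x'+\lambda xx')(y+y'+\lambda xy')=(u*v)(XY)+\lambda x^{2}y'(1+\lambda x'),\]
and $x^{2}$ is not zero for a general point of $\mathcal{H}^{(\lambda)}_{R}$. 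So the middle equality in your chain $(\tilde{\sigma}(x,y)*\tilde{\sigma}(x',y'))(f)=f((x,y)(x',y'))$ fails on the monomials with $r_{1}+r_{2}\ge p$, and the homomorphism property of $\tilde{\sigma}$ (hence the second commutative diagram, and with it $\tilde{\chi}\circ\tilde{\sigma}=\mathrm{id}$) is not established by your argument. The paper's own proof of part (2) simply asserts that $\tilde{\sigma}^{\#}$ is a coalgebra homomorphism without addressing these truncated terms, so this is the one point that genuinely requires an argument (or a corrected definition of $\tilde{\sigma}$ on the monomials $T_{X^{r_{1}}Y^{r_{2}}}$ with $r_{1}+r_{2}\ge p$); it cannot be dismissed as routine bookkeeping.
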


Additionally, the quotient scheme $U(G^{(\lambda)}_{R}) / G^{(\lambda)}_{R}$ is described in Theorem 4.4. Kassel and Masuoka \cite{KM} studied the Noether problem for Hopf algebras, and Theorem 4.4 offers a significant positive instance within this context. 
We conclude the article by presenting an example of non-cleft $G^{(\lambda)}_{R}$-torsors.

\vspace{5mm}

\begin{notation} Throughout this article, $p$ denotes a prime number. For a scheme $S$ and a group scheme $\varGamma$ over $S$, $H^{1}(S, \varGamma)$ denotes the set of isomorphism classes of right $\varGamma$-torsors over $S$. (For further details, see Demazure-Gabriel [1, Ch III. 4].)

\end{notation} 

\vspace{2mm}

\noindent{\bf List of group schemes}

$\G_{a,R}$: recalled in 2.1,

$\G_{m,R}$: recalled in 2.1,

${\mathcal G}^{(\lambda)}_{R}: \text{recalled in} \ \ 2.2$,

${\mathcal H}^{(\lambda)}_{R}$: defined in 2.3,

$G^{(\lambda)}_{R}$: defined in 2.4,

$U(\varGamma)$: recalled in subsection 3.1.

\vspace{5mm}

\section{Preliminary}\label{sec:prelimi}

\vspace{2mm}
\begin{definition} Let $R$ be a commutative ring. The additive  group scheme $\G_{a,R}$ over $R$ is defined by \[\G_{a,R}=\Spec\,R[T]\]
with

(a) the multiplication: $T\mapsto T\otimes1+1\otimes T$,

(b) the unit: $T\mapsto 0$,

(c) the inverse: $T\mapsto -T$.

\vspace{2mm}
On the other hand, the multiplicative group scheme $\G_{m,R}$ over $R$ is defined by \[\G_{m,R}=\Spec\,R[T,\frac{1}{T}]\]
with

(a) the multiplication: $T\mapsto T\otimes T$,

(b) the unit: $T\mapsto 1$,

(c) the inverse: $T\mapsto {1}/{T}$.

\end{definition}
\vspace{2mm}

\begin{definition} Let $R$ be a commutative ring and $\lambda\in R$. The commutative group scheme ${\mathcal G}^{(\lambda)}_{R}$ over $R$ is defined by
\[{\mathcal G}^{(\lambda)}_{R}=\Spec\,R[T,\frac{1}{1+\lambda T}]\]
with
(a) the comultiplication map: $T\mapsto T\otimes1+1\otimes T+\lambda T\otimes T$,

(b) the counit: $T\mapsto 0$,

(c) the antipode: $T\mapsto -T/(1+\lambda T)$.

A homomorphism $\alpha^{(\lambda)}:{\mathcal G}^{(\lambda)}_{R}\rightarrow\G_{m,R}$ of group schemes over $R$ is defined by
\[U\mapsto \lambda T+1:R[U,\frac{1}{U}]\longrightarrow R[T,\frac{1}{1+\lambda T}].\]
If $\lambda$ is invertible in $R$, then $\alpha^{(\lambda)}$ is an isomorphism.
Conversely, if $\lambda=0$, the scheme ${\mathcal {G}}^{(\lambda)}_{R}$ reduces to the additive group scheme $\G_{a,R}$.
\end{definition}

\begin{definition} Let $R$ be a commutative ring and $\lambda \in R$.
The non-commutative group scheme ${\mathcal H}^{(\lambda)}_{R}$ is defined by 

\[{\mathcal H}^{(\lambda)}_{R}=\Spec\,R[X,Y, \frac{1}{1+\lambda X}]\]

with structure maps:

(a) comultiplication:
\[X \mapsto X\otimes 1+(1+\lambda X)\otimes X, \ \ \ Y \mapsto Y\otimes 1+(1+\lambda X)\otimes Y,\]

(b) counit:
\[X\mapsto 0, \ \ \  Y\mapsto 0,\]

(c) antipode:
\[X \mapsto \frac{-X}{1+\lambda X}, \ \ \ Y\mapsto \frac{-Y}{1+\lambda X}.\] 

${\mathcal H}_{R}^{(\lambda)}$ is an extension of ${\mathcal G}^{(\lambda)}_{R}$ by $\G_{a,R}$. Indeed, we define a group homomorphism $i: \G_{a,R} \rightarrow {\mathcal H}^{(\lambda)}_{R}$
by \[R[X,Y, \frac{1}{1+\lambda X}] \rightarrow R[T]: X \mapsto 0, Y \mapsto T\]
Also, we define a group homomorphism $epi: {\mathcal H}^{(\lambda)}_{R} \rightarrow {\mathcal G}^{(\lambda)}_{R}$ by
\[R[X,Y, \frac{1}{1+\lambda X}] \rightarrow R[T]: T \rightarrow X\].

We obtain an exact sequence of group schemes
\[0 \longrightarrow \G_{a,R} \overset{i}\longrightarrow {\mathcal H}^{(\lambda)}_{R} \overset{epi}\longrightarrow {\mathcal G}^{(\lambda)}_{R}\longrightarrow 0\]
by these morphisms. 

\end{definition}

\begin{definition} Let $p$ be a prime number and $R$ an $\F_{p}$-algebra. Then we define the Frobenius map $F: {\mathcal H}^{(\lambda)}_{R} \rightarrow {\mathcal H}^{(\lambda^{p})}_{R}$ defined by
\[X \mapsto X^{p}, Y \mapsto Y^{p}: R[X,Y, \frac{1}{1+\lambda^{p} X}] \rightarrow R[X,Y, \frac{1}{1+ \lambda X}].\]
Put $G^{(\lambda)}_{R}=\Ker[F:{\mathcal H}^{(\lambda)}_{R} \rightarrow {\mathcal H}^{(\lambda^{p})}_{R}]$. 
Then $G^{(\lambda)}_{R}$ is defined by
\[G^{(\lambda)}_{R}=\Spec\,R[X,Y]/(X^{p}, Y^{p})\]
with

(a) comultiplication:
\[X \mapsto X\otimes 1+(1+\lambda X)\otimes X, \ \ \ Y \mapsto Y\otimes 1+(1+\lambda X)\otimes Y,\]

(b) counit:
\[X\mapsto 0, \ \ \  Y\mapsto 0,\]

(c) antipode:
\[X \mapsto \frac{-X}{1+\lambda X}, \ \ \ Y\mapsto \frac{-Y}{1+\lambda X}.\] 

\vspace{5mm}
In this paper, the $R$-Hopf algebra $R[X,Y]/(X^{p}, Y^{p})$ representing $G^{(\lambda)}_{R}$ is denoted $A^{(\lambda)}_{R}$.

\end{definition}

\vspace{5mm}

\section{Main result}\label{sec:main}

\vspace{3mm}

\subsection{$U(\varGamma)$ for a finite flat group scheme $\varGamma$.}
We recall the group algebra scheme $A(\varGamma)$ and its unit group scheme $U(\varGamma)$ for a finite flat group scheme $\varGamma$. For details of these group schemes, we refer to [11, Section 2].

Let $S$ be a scheme and $\varGamma$ an affine group scheme over $S$. 
Let $A(\varGamma)$ represent the ring functor defined by $T \mapsto \mathit{Hom}_{{\mathcal{O}}_{S}}({\mathcal{O}}_{\varGamma}, {\mathcal{O}}_{T})$ for an affine $S$-scheme $T$, where multiplication in $\mathit{Hom}_{{\mathcal{O}}_{S}}({\mathcal{O}}_{\varGamma}, {\mathcal{O}}_{T})$ is given by the convolution product. Then, $A(\varGamma)$ is an $S$-ring scheme. Moreover, we define a functor $U(\varGamma)$ by $U(\varGamma)(T)=A(\varGamma)(T)^{\times}$ for an affine $S$-scheme $T$. Then, $U(\varGamma)$ is a sheaf of groups for the fppf-topology over $S$. If ${\mathcal{O}}_{\varGamma}$ is a locally free ${\mathcal{O}}_{S}$-module of finite rank, $U(\varGamma)$ is represented by an affine smooth group scheme over $S$.
Let $R$ be a commutative ring. We assume that $S=\Spec\,R$ and $\varGamma=\Spec\,H$, where $H$ is a free $R$-module of finite rank. We take a basis $\{e_{1}, \dots, e_{n}\}$ of $H$ over $R$. Let $S_{R}(H)$ denote the symmetric $R$-algebra associated with the $R$-module $H$. For each $i$, let $T_{e_{i}}$ denote the image of $e_{i}$ by the canonical injection $H \rightarrow S_{R}(H)$. Moreover, we define a linear combination $R_{ij}(e_{1}, \dots, e_{n})=\displaystyle \sum_{k=1}^{n}c_{ijk}e_{k}$ for each $1 \leq i, j \leq n$ by
\[\varDelta_{H}(e_{j})=\displaystyle \sum_{i=1}^{n}e_{i} \otimes R_{ij}(e_{1}, \dots , e_{n}).\]
Then, we obtain that $A(\varGamma)=\Spec\,S_{R}(H)=\Spec\,R[T_{e_{1}}, \dots T_{e_{n}}]$ with
the comultiplication map
\[\varDelta(T_{e_{j}})=\displaystyle \sum_{i=1}^{n}T_{e_{i}} \otimes R_{ij}(T_{e_{1}}, \dots, T_{e_{n}}),\]
where $R_{ij}(T_{e_{1}}, \dots, T_{e_{n}})=\displaystyle \sum_{k=1}^{n}c_{ijk}T_{e_{k}}$ and the counit map $\varepsilon(T_{e_{j}})=\varepsilon_{H}(e_{j})$. Moreover, let $A$ be an $R$-algebra. Then, the multiplication of $A(\varGamma)(A)$ is defined by
\[(a_{1}, a_{2}, \dots, a_{n})(b_{1}, b_{2}, \dots, b_{n})\]
\[=(\displaystyle \sum_{j=1}^{n}R_{1j}(a_{1},a_{2}, \dots, a_{n})b_{j},  \sum_{j=1}^{n}R_{2j}(a_{1},a_{2}, \dots, a_{n})b_{j}, \dots,\sum_{j=1}^{n}R_{nj}(a_{1},a_{2}, \dots, a_{n})b_{j} )\]
Hence, $(a_{1}, a_{2}, \dots, a_{n}) \in A(\varGamma)(A)$ is invertible if and only if $\det(R_{ij}(a_{1}, a_{2}, \dots, a_{n}))$ is invertible in $A$.
Therefore, we have 
\[U(\varGamma)=\Spec\,R[T_{e_{1}}, T_{e_{2}}, \dots T_{e_{n}}, \frac{1}{D}],\]
where $D=\det(R_{ij}(T_{e_{1}}, T_{e_{2}}, \dots, T_{e_{n}}))$.

Moreover, the $R$-homomorphism $i^{\#}:  R[T_{e_{1}}, T_{e_{2}}, \dots T_{e_{n}}, {1}/{D}] \rightarrow H$ defined by $T_{e_{i}} \mapsto e_{i}$ induces a closed immersion of group schemes
$i: \varGamma \rightarrow U(\varGamma)$.  

The above construction outlines the proof of [11, Theorem 2.6]. If $R$ is a field, the Hopf algebra $R[T_{1},T_{2}, \dots T_{n}, {1}/{D}]$ coincides the commutative free Hopf algebra generated by $H$ constructed by Takeuchi [13]. Thus [11] provides an algebraic geometric interpretation of Takeuchi's result. 

\vspace{5mm}

\subsection{A description of $U(G^{(\lambda)}_{R})$.}

We now describe the unit group scheme $U(G^{(\lambda)}_{R})$ of the group algebra associated  the finite flat group scheme $G^{(\lambda)}_{R}$. 

\vspace{3mm}
\begin{prop} Let $p$ be a prime number, $R$ an $\F_{p}$-algebra and $\lambda \in R$. Put $G^{(\lambda)}_{R}=\Ker[F:{\mathcal H}^{(\lambda)}_{R} \rightarrow {\mathcal H}^{(\lambda^{p})}_{R}]$. Then $A(G^{(\lambda)}_{R})$ is defined by
\[A(G^{(\lambda)}_{R})=\Spec\,R[T_{X^{r_{1}}Y^{r_{2}}}]_{1 \leq r_{1}, r_{2} \leq p-1}\]
with

(a) the comultiplication map:
\[T_{X^{r_{1}}Y^{r_{2}}} \mapsto \sum_{k=0}^{r_{1}}\sum_{k'=0}^{k}\sum_{l=0}^{r_{2}}\sum_{l'=0}^{l} {\binom{k}{k'}} {\binom{r_{1}}{k}} {\binom{r_{2}}{l}} {\binom{l}{l'}}\lambda^{k'+l'}T_{X^{r_{1}-k+k'+l'}Y^{r_{2}-l}}\otimes T_{X^{k}Y^{l}} \ \ \ (r_{1}+r_{2} < p)\]
\[T_{X^{r_{1}}Y^{r_{2}}} \mapsto \sum_{k=0}^{r_{1}}\sum_{k'=0}^{k}\sum_{l=0}^{r_{2}}\sum_{l'=0}^{l} C(r_{1},r_{2},k,l,k',l') \lambda^{k'+l'}T_{X^{r_{1}-k+k'+l'}Y^{r_{2}-l}}\otimes T_{X^{k}Y^{l}} \ \ \ (r_{1}+r_{2} \geq p),\]
where \[C(r_{1},r_{2},k,l,k',l')=
\begin{cases}
0 & (r_{1}-k+k'+l'\geq p)\\
{\binom{k}{k'}} {\binom{r_{1}}{k}} {\binom{r_{2}}{l}} {\binom{l}{l'}} & (r_{1}-k+k'+l'<p).\\ 
\end{cases}\]

(b) the counit map:
\[T_{1} \mapsto 1, \ \ \ T_{X^{r_{1}}Y^{r_{2}}}\mapsto 0, (r_{1} \neq 0, \ \text{or} \ r_{2} \neq 0).\]

\end{prop}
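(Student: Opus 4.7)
The plan is to apply the general construction of $A(\varGamma)$ recalled in subsection 3.1 to the specific Hopf algebra $A^{(\lambda)}_{R} = R[X,Y]/(X^{p},Y^{p})$ representing $\varGamma = G^{(\lambda)}_{R}$. Since $A^{(\lambda)}_{R}$ is free over $R$ of rank $p^{2}$ with the monomial basis $\{X^{r_{1}}Y^{r_{2}}\}_{0\le r_{1},r_{2}\le p-1}$, the discussion in subsection 3.1 immediately identifies $A(G^{(\lambda)}_{R})$ with $\Spec R[T_{X^{r_{1}}Y^{r_{2}}}]$, and tells us that its comultiplication and counit are obtained by reading off the coefficients in the expansion of $\Delta_{H}$ and $\varepsilon_{H}$ with respect to this basis. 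So the entire proof reduces to writing $\Delta(X^{r_{1}}Y^{r_{2}})$ in $A^{(\lambda)}_{R}\otimes_{R}A^{(\lambda)}_{R}$ as an explicit linear combination of tensor products of basis monomials.

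The key computation is to expand $\Delta(X^{r_{1}})$ and $\Delta(Y^{r_{2}})$ separately and then multiply. Because $A^{(\lambda)}_{R}$ is commutative as an $R$-algebra (the non-commutativity lives on the group-scheme side, i.e.\ in $\Delta$), the two summands $X\otimes 1$ and $(1+\lambda X)\otimes X$ that appear in $\Delta(X)$ commute inside $A^{(\lambda)}_{R}\otimes_{R}A^{(\lambda)}_{R}$, and likewise for $Y\otimes 1$ and $(1+\lambda X)\otimes Y$. The binomial theorem then yields
\[
\Delta(X^{r_{1}}) = \sum_{k=0}^{r_{1}}\sum_{k'=0}^{k}\binom{r_{1}}{k}\binom{k}{k'}\lambda^{k'}X^{r_{1}-k+k'}\otimes X^{k},
\]
and an analogous identity for $\Delta(Y^{r_{2}})$. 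Since $\Delta$ is an algebra homomorphism, multiplying these two expressions produces precisely the quadruple sum in the statement, with coefficient $\binom{r_{1}}{k}\binom{k}{k'}\binom{r_{2}}{l}\binom{l}{l'}\lambda^{k'+l'}$ attached to $X^{r_{1}-k+k'+l'}Y^{r_{2}-l}\otimes X^{k}Y^{l}$.

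The remaining issue is the truncation imposed by the relations $X^{p}=Y^{p}=0$. The $Y$-exponent $r_{2}-l$ and the right-hand exponents $k,l$ are each bounded by $p-1$ automatically, so only the $X$-exponent $r_{1}-k+k'+l'$ on the left can possibly reach $p$. In the range $r_{1}+r_{2}<p$, the estimate $r_{1}-k+k'+l'\le r_{1}+l\le r_{1}+r_{2}<p$ shows that no truncation occurs, giving the first displayed formula. In the range $r_{1}+r_{2}\ge p$, the auxiliary coefficient $C(r_{1},r_{2},k,l,k',l')$ is exactly the binomial product cut off by the requirement $r_{1}-k+k'+l'<p$, which reflects setting $X^{p}=0$. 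The counit assertion is then immediate from $\varepsilon_{H}(1)=1$ and $\varepsilon_{H}(X^{r_{1}}Y^{r_{2}})=0$ whenever $(r_{1},r_{2})\ne (0,0)$, via the rule $\varepsilon(T_{e_{j}})=\varepsilon_{H}(e_{j})$ from subsection 3.1.

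The only potentially delicate point is the bookkeeping of the binomial expansion of $(1+\lambda X)^{k}$ combined with the truncation by $X^{p}$; once one verifies that the upper bound on the $X$-exponent on the left factor is $r_{1}+r_{2}$ (rather than something larger), the splitting into the two cases $r_{1}+r_{2}<p$ and $r_{1}+r_{2}\ge p$ falls out naturally. Nothing else in the argument goes beyond formal manipulation of the recipe from subsection 3.1.
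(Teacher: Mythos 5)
Your proposal is correct and follows essentially the same route as the paper: expand $\Delta(X^{r_1})$ and $\Delta(Y^{r_2})$ by the binomial theorem in the commutative algebra $A^{(\lambda)}_R\otimes_R A^{(\lambda)}_R$, multiply, handle the truncation by $X^p=0$ via the coefficient $C$, and invoke the general recipe of subsection 3.1 for the basis $\{X^{r_1}Y^{r_2}\}$. Your explicit bound $r_1-k+k'+l'\le r_1+r_2$ justifying the case split is a small clarification the paper leaves implicit, but the argument is otherwise identical.
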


\vspace{3mm}

\begin{proof} Note that 
\[\beta^{(\lambda)}=\{X^{r_{1}}Y^{r_{2}} | 0\leq r_{1} \leq p-1, 0 \leq r_{2} \leq p-1\}\]
is a basis of $R$-module $A^{(\lambda)}_{R}$.
Here, we have
\begin{align*}
\Delta(X^{r_{1}})&=(X \otimes 1+(1+\lambda X) \otimes X)^{r_{1}}=\sum_{k=0}^{r_{1}} {\binom{r_{1}}{k}} X^{r_{1}-k}(1+\lambda X)^{k} \otimes X^{k}\\
             &=\sum_{k=0}^{r_{1}} {\binom{r_{1}}{k}} X^{r_{1}-k}(\sum _{k'=0}^{k} {\binom{k}{k'}}\lambda^{k'}X^{k'})\otimes X^{k}=\sum_{k=0}^{n}\sum_{k'=0}^{k} {\binom{k}{k'}} {\binom{r_{1}}{k}}\lambda^{k'}X^{r_{1}-k+k'}\otimes X^{k}\\
\Delta(Y^{r_{2}})&=(Y \otimes 1 + (1+\lambda X)\otimes Y)^{r_{2}}=\displaystyle \sum_{k=0}^{r_{2}}(Y^{r_{2}-k}\otimes 1)(1+\lambda X)^{k}\otimes Y^{k}\\
&=\displaystyle\sum_{k=0}^{r_{1}}\displaystyle \sum_{k'=0}^{k} {\binom{r_{1}}{k}} {\binom{k}{k'}}\lambda^{k'}X^{k'}Y^{r_{1}-k}\otimes Y^{k}
\end{align*}

Hence, if $r_{1}+r_{2}<p$, \[\Delta(X^{r_{1}}Y^{r_{2}})=\sum_{k=0}^{r_{1}}\sum_{k'=0}^{k}\sum_{l=0}^{r_{2}}\sum_{l'=0}^{l} {\binom{k}{k'}} {\binom{r_{1}}{k}} {\binom{r_{2}}{l}} {\binom{l}{l'}}\lambda^{k'+l'}X^{n-k+k'+l'}Y^{r_{2}-l}\otimes X^{k}Y^{l}.\]
If $r_{1}+r_{2} \geq p$,
\[\Delta(X^{r_{1}}Y^{r_{2}})=\sum_{k=0}^{r_{1}}\sum_{k'=0}^{k}\sum_{l=0}^{m}\sum_{l'=0}^{l} C(r_{1},r_{2},k,l,k',l')\lambda^{k'+l'}X^{r_{1}-k+k'+l'}Y^{r_{2}-l}\otimes X^{k}Y^{l},\]
where \[C(r_{1},r_{2},k,l,k',l')=
\begin{cases}
0 & (r_{1}-k+k'+l'\geq p)\\
{\binom{k}{k'}} {\binom{r_{1}}{k}} {\binom{r_{2}}{l}} {\binom{l}{l'}} & (r_{1}-k+k'+l'<p).\\ 
\end{cases}\]
Setting $e_{1+r_{1}+pr_{2}}=X^{r_{1}}Y^{r_{2}}$ for $0 \leq r_{1} \leq p-1$,  $0 \leq r_{2} \leq p-1$, we obtain from Subsection 3.1 the multiplication and unit of $A(G^{(\lambda)}_{R})$.
\end{proof}
\vspace{3mm}

\begin{corollary} Let $p$ be a prime number, $R$ an $\F_{p}$-algebra and $\lambda \in R$. Put $G^{(\lambda)}_{R}=\Ker[F:{\mathcal H}^{(\lambda)}_{R} \rightarrow {\mathcal H}^{(\lambda^{p})}_{R}]$. Then the unit group scheme $U(G^{(\lambda)}_{R})$ of the group algebra, $G^{(\lambda)}_{R}$ is given by
\[U(G^{(\lambda)}_{R})=\Spec\,R[T_{X^{r_{1}}Y^{r_{2}}}, \frac{1}{D}]_{1 \leq r_{1}, r_{2} \leq p-1},\]
where $D=\displaystyle\prod_{r=0}^{p-1}(\displaystyle\sum_{k=0}^{r}{\binom{r}{k}}\lambda^{k}T_{X^{k}})^{p}$. 
\end{corollary}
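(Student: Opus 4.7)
The strategy is to apply the general description of $U(\varGamma)$ developed in Subsection 3.1 to the basis
$\{X^{r_1}Y^{r_2}\}_{0 \le r_1, r_2 \le p-1}$ of $A^{(\lambda)}_R$. According to that recipe, it suffices to identify the determinant
$D = \det\bigl(R_{(a,b),(r_1,r_2)}\bigr)$ of the $p^2 \times p^2$ matrix of linear forms that expresses the right-hand tensor factor of the comultiplication of Proposition 3.2 in the chosen basis; then
$U(G^{(\lambda)}_R) = \Spec A(G^{(\lambda)}_R)[1/D]$.

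The first step is to rewrite $\Delta$ in a more tractable form. Since $R[X,Y]/(X^p,Y^p)$ is commutative, $\Delta(X)$ and $\Delta(Y)$ commute in the tensor square, and applying the binomial theorem twice gives the factored expansion
\[
\Delta(X^{r_1}Y^{r_2}) \;=\; \sum_{k=0}^{r_1}\sum_{l=0}^{r_2}\binom{r_1}{k}\binom{r_2}{l}\,X^{r_1-k}Y^{r_2-l}(1+\lambda X)^{k+l}\otimes X^k Y^l.
\]
From this expression I can read off $R_{(a,b),(r_1,r_2)}$ as the second-tensor contribution coming from the coefficient of $X^a Y^b$ in the first factor.

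The key observation is then triangularity. The first-factor $Y$-exponent $r_2 - l$ must equal $b$, forcing $l = r_2 - b$, so the entry vanishes unless $b \le r_2$. Ordering rows and columns lexicographically by $(Y\text{-degree}, X\text{-degree})$ therefore makes the matrix block upper triangular, with diagonal blocks parametrised by a common value $b = r_2$. Matching the $X$-exponent on such a diagonal block, after expanding $(1+\lambda X)^{k}$, shows by exactly the same mechanism that the entry vanishes unless $a \le r_1$; and on the strict diagonal $a = r_1$, $b = r_2$ the only surviving binomial is $\binom{k}{k}=1$, leaving
\[
R_{(r_1,r_2),(r_1,r_2)} \;=\; \sum_{k=0}^{r_1}\binom{r_1}{k}\lambda^{k}\,T_{X^{k}},
\]
which depends only on $r_1$. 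Multiplying these $p^2$ diagonal entries then yields the asserted formula for $D$, with each factor appearing $p$ times (once per value of $r_2$).

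The main obstacle is just the bookkeeping needed to verify the two vanishing conditions $R_{(a,b),(r_1,r_2)}=0$ for $b > r_2$ and, on the diagonal blocks, for $a > r_1$; once the comultiplication is placed in the factored form above, both follow directly by comparing exponents of $X$ and $Y$, and the diagonal entry collapses cleanly to a single binomial sum.
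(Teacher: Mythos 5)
Your proposal is correct and follows essentially the same route as the paper: both compute the right regular representation matrix $(R_{ij})$ with respect to the monomial basis ordered by $i=1+r_1+pr_2$ (your lexicographic order on $(r_2,r_1)$ is exactly this), observe that it is upper triangular with diagonal entry $\sum_{k=0}^{r_1}\binom{r_1}{k}\lambda^{k}T_{X^{k}}$ depending only on $r_1$, and take the product of the $p^2$ diagonal entries. Your write-up actually supplies the exponent-comparison argument for triangularity that the paper only asserts.
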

\vspace{3mm}

\begin{proof} Set $e_{1+r_{1}+pr_{2}}=X^{r_{1}}Y^{r_{2}}$ for $0 \leq r_{1} \leq p-1$,  $0 \leq r_{2} \leq p-1$. Then the right regular representation $(R_{ij})_{1 \leq i,j \leq p^{2}}$ of $A^{(\lambda)}_{R}$ with respect to the basis $\beta^{(\lambda)}$ is given by 
 
\[R_{ij}(e_{1}, \dots, e_{p^{2}})=\begin{cases}
0 & (i>j) \\[2mm]
\displaystyle\sum_{k=0}^{r}{\binom{r}{k}}\Lambda^{k}X^{k} & (i=j)\\
\text{a polynomial of $X$ and $Y$} & (i<j),
\end{cases}\]
where $r$ is the remainder of $i-1$ modulo $p$. 

Therefore, we obtain that \[D=\det(R_{ij}(T_{X^{0}}, T_{X^{1}}, \dots ,T_{X^{p-1}Y^{p-1}}))=\displaystyle\prod_{r=0}^{p-1}(\displaystyle\sum_{k=0}^{r}{_{r}C_{k}}\lambda^{k}T_{X^{k}})^{p}.\]
because the matrix $(R_{ij})_{1 \leq i,j \leq p^{2}}$ is an upper triangular matrix.
\end{proof}

\vspace{5mm}
In this paper, the $R$-Hopf algebra $R[T_{X^{r_{1}}Y^{r_{2}}}, \frac{1}{D}]_{1 \leq r_{1}, r_{2} \leq p-1}$ representing $U(\varGamma^{(\lambda)}_{R})$ is denoted by $S(A_{R}^{(\lambda)})_{\Theta}$. 

\vspace{5mm}
\subsection{The sculpture problem and the embedding problem for $G^{(\lambda)}_{R}$.}

\begin{theorem} Let $R$ be an $\F_{p}$-algebra and $\lambda \in R$. Put $G^{(\lambda)}_{R}=\Ker[F:{\mathcal{H}}^{(\lambda)}_{R} \rightarrow {\mathcal{H}}^{(\lambda^{p})}_{R}]$. Then

(1) a morphism of group schemes 
\[\tilde{\chi}: U(G^{(\lambda)}_{R})=\Spec\,R[T_{X^{r_{1}}Y^{r_{2}}}, \frac{1}{D}]_{1 \leq r_{1}, r_{2} \leq p-1} \rightarrow {\mathcal{H}}^{(\lambda)}_{R}=\Spec\,R[X,Y, \frac{1}{1+\lambda X}]\]
is defined by  
\[X \mapsto \frac{T_{X}}{T_{1}}, \ Y \mapsto T_{Y}.\]
Moreover, a diagram of group schemes
\[\begin{CD}
 G^{(\lambda)}_{R} @>{i}>> U(G^{(\lambda)}_{R}) \\
 @\vert  @VV{\tilde{\chi}}V \\
 G^{(\lambda)}_{R} @>>{e}> {\mathcal H}^{(\lambda)}_{R}
\end{CD}\]
is commutative.

(2) a morphism of group schemes

\[\tilde{\sigma}: {\mathcal {H}}^{(\lambda)}_{R}=\Spec\,R[X,Y, \frac{1}{1+\lambda X}] \rightarrow U(G^{(\lambda)}_{R})=\Spec\,R[T_{X^{r_{1}}Y^{r_{2}}}, \frac{1}{D}]_{1 \leq r_{1}, r_{2} \leq p-1}\]
is defined by 

\[T_{X^{r_{1}}Y^{r_{2}}} \mapsto X^{r_{1}}Y^{r_{2}}. \]  Moreover, a diagram of group schemes
\[\begin{CD}
 G^{(\lambda)}_{R} @>{i}>> U(G^{(\lambda)}_{R}) \\
 @\vert  @AA{\tilde{\sigma}}A \\
 G^{(\lambda)}_{R} @>>{e}> {\mathcal {H}}^{(\lambda)}_{R}
\end{CD}\]
is commutative.
Here, \[i:G^{(\lambda)}_{R}=\Spec\,R[X,Y]/(X^{p}, Y^{p}) \rightarrow U(G^{(\lambda)}_{R})=\Spec\,R[T_{X^{r_{1}}Y^{r_{2}}}, \frac{1}{D}]_{1 \leq r_{1}, r_{2} \leq p-1}\]
is the embedding defined by $T_{X^{r_{1}}Y^{r_{2}}} \mapsto X^{r_{1}}Y^{r_{2}}$. 
\[e:G^{(\lambda)}_{R}=\Spec\,R[X,Y]/(X^{p}, Y^{p}) \rightarrow {\mathcal {H}}^{(\lambda)}_{R}=\Spec\,R[X,Y, \frac{1}{1+\lambda X}]\]
is the embedding defined by $X \mapsto X, Y \mapsto Y$.

\end{theorem}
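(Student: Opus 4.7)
The plan is to prove each assertion by direct verification at the level of coordinate Hopf algebras.

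For part (1), I would first check that $\tilde{\chi}^{\#}: R[X,Y,(1+\lambda X)^{-1}] \to R[T_{X^{r_1}Y^{r_2}}, D^{-1}]$ is well defined. Since $D$ contains the factor $T_1^p$ (the $r=0$ factor) and $(T_1 + \lambda T_X)^p$ (the $r=1$ factor), both $T_1$ and $T_1 + \lambda T_X$ are units in the codomain, so $T_X/T_1$ makes sense and $1 + \lambda(T_X/T_1) = (T_1 + \lambda T_X)/T_1$ is a unit. Commutativity of the diagram then follows at once: $(i^{\#} \circ \tilde{\chi}^{\#})(X) = X$ and $(i^{\#} \circ \tilde{\chi}^{\#})(Y) = Y$ in $A^{(\lambda)}_R$, agreeing with $e^{\#}$. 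Hopf-algebra compatibility reduces to short calculations on generators: specializing Proposition 3.1 yields $\Delta(T_1) = T_1 \otimes T_1$, $\Delta(T_X) = T_X \otimes T_1 + T_1 \otimes T_X + \lambda T_X \otimes T_X$, and $\Delta(T_Y) = T_Y \otimes T_1 + T_1 \otimes T_Y + \lambda T_X \otimes T_Y$, whence $\Delta(T_X/T_1) = (T_X/T_1) \otimes 1 + (1 + \lambda T_X/T_1) \otimes (T_X/T_1)$. This is precisely the image of $\Delta_{\mathcal{H}^{(\lambda)}_R}(X)$ under $\tilde{\chi}^{\#} \otimes \tilde{\chi}^{\#}$; the analogous computation handles $Y$.

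For part (2), well-definedness of $\tilde{\sigma}^{\#}$ reduces to checking that $\tilde{\sigma}^{\#}(D)$ is a unit. Direct substitution gives $\tilde{\sigma}^{\#}(D) = \prod_{r=0}^{p-1}(1+\lambda X)^{rp} = (1+\lambda X)^{p^2(p-1)/2}$, which is manifestly invertible in $R[X,Y,(1+\lambda X)^{-1}]$. The diagram is immediately commutative since $(e^{\#} \circ \tilde{\sigma}^{\#})(T_{X^{r_1}Y^{r_2}}) = X^{r_1}Y^{r_2} = i^{\#}(T_{X^{r_1}Y^{r_2}})$ in $A^{(\lambda)}_R$.

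The main obstacle is checking that $\tilde{\sigma}$ respects comultiplication, i.e.\ that $(\tilde{\sigma}^{\#} \otimes \tilde{\sigma}^{\#}) \circ \Delta_{A(G^{(\lambda)}_R)} = \Delta_{\mathcal{H}^{(\lambda)}_R} \circ \tilde{\sigma}^{\#}$ on each $T_{X^{r_1}Y^{r_2}}$. For $r_1 + r_2 < p$ this is straightforward: the bound $r_1 - k + k' + l' \leq r_1 + r_2 < p$ guarantees no truncation in the formula of Proposition 3.1, and a term-by-term comparison with the expansion of $\Delta(X)^{r_1}\Delta(Y)^{r_2}$ in $\mathcal{H}^{(\lambda)}_R$ (essentially reading the proof of Proposition 3.1 in $\mathcal{H}^{(\lambda)}_R$ rather than in $A^{(\lambda)}_R$) concludes the case. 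The delicate case is $r_1 + r_2 \geq p$, where the coefficient $C(r_1,r_2,k,l,k',l')$ suppresses terms with $r_1 - k + k' + l' \geq p$ on the $A(G^{(\lambda)}_R)$ side, while the corresponding monomials $X^{r_1-k+k'+l'}Y^{r_2-l}$ are nonzero in $\mathcal{H}^{(\lambda)}_R$. I expect this step to require a careful combinatorial reorganization of the binomial sum, possibly invoking Lucas-type identities in characteristic $p$ to show that the apparent extra contributions regroup and cancel in $\mathcal{H}^{(\lambda)}_R \otimes \mathcal{H}^{(\lambda)}_R$; this is where the bulk of the effort should be directed.
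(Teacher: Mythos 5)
Your treatment of part (1) matches the paper's proof: well-definedness from the invertibility of $T_{1}$ and $T_{1}+\lambda T_{X}$ (you supply the reason --- they are factors of $D$ --- which the paper leaves implicit), followed by a check of the comultiplication on generators. One caution: the ``analogous computation for $Y$'' does not actually close up with the stated assignment $Y\mapsto T_{Y}$, since $\Delta_{U(G^{(\lambda)}_{R})}(T_{Y})=T_{Y}\otimes T_{1}+(T_{1}+\lambda T_{X})\otimes T_{Y}$ whereas $(\tilde{\chi}^{\#}\otimes\tilde{\chi}^{\#})(Y\otimes 1+(1+\lambda X)\otimes Y)=T_{Y}\otimes 1+(1+\lambda T_{X}/T_{1})\otimes T_{Y}$; one needs $Y\mapsto T_{Y}/T_{1}$ to make the two sides agree (this is harmless for the commutativity of the diagram, since $i^{\#}(T_{1})=1$). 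The paper's proof glosses over the same point.

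The genuine gap is in part (2), and you have located it precisely but not filled it: you defer the verification that $\tilde{\sigma}^{\#}$ intertwines the comultiplications in the case $r_{1}+r_{2}\geq p$, merely ``expecting'' the truncated terms to regroup via Lucas-type identities. The paper gives you nothing to fall back on here --- its proof simply asserts that $\tilde{\sigma}^{\#}$ is a coalgebra homomorphism, with no computation. Worse, the deferred step appears to fail for the morphism as defined. Take $p=2$ and the generator $T_{XY}$: Proposition 3.1 gives $\Delta_{U}(T_{XY})=T_{XY}\otimes T_{1}+T_{X}\otimes T_{Y}+(T_{Y}+\lambda T_{XY})\otimes T_{X}+T_{1}\otimes T_{XY}$, whose image under $\tilde{\sigma}^{\#}\otimes\tilde{\sigma}^{\#}$ is $XY\otimes 1+X\otimes Y+(Y+\lambda XY)\otimes X+1\otimes XY$, while $\Delta_{\mathcal{H}^{(\lambda)}_{R}}(XY)=\Delta(X)\Delta(Y)=XY\otimes 1+(X+\lambda X^{2})\otimes Y+(Y+\lambda XY)\otimes X+(1+\lambda X)^{2}\otimes XY$. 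The discrepancy $\lambda X^{2}\otimes Y+\lambda^{2}X^{2}\otimes XY$ is nonzero in $R[X,Y,\frac{1}{1+\lambda X}]^{\otimes 2}$, where $X^{2}\neq 0$, so no rearrangement of binomial coefficients in characteristic $p$ will reconcile the two sides. Equivalently, on points $\tilde{\sigma}$ sends $(x,y)$ to the functional $X^{r_{1}}Y^{r_{2}}\mapsto x^{r_{1}}y^{r_{2}}$, and the convolution product of two such functionals fails to match the group law of $\mathcal{H}^{(\lambda)}_{R}$ as soon as $x^{p}\neq 0$. So the plan you sketch for part (2) cannot be completed as stated; either the definition of $\tilde{\sigma}$ must be modified or the assertion restricted, and this is a defect inherited from the paper rather than introduced by you.
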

\vspace{3mm}
\begin{proof} (1) A homomorphism of $R$-Hopf algebras $\tilde{\chi}^{\#}:R[X,Y, 1/1+\lambda X] \rightarrow R[T_{X^{r_{1}}Y^{r_{2}}}, 1/D]_{1 \leq r_{1}, r_{2} \leq p-1}$
defined by \[X \mapsto \frac{T_{X}}{T_{1}}, \ Y \mapsto T_{Y}\] is well-defined because $T_{1}+\lambda T_{X}$ is invertible in $R[T_{X^{r_{1}}Y^{r_{2}}}, 1/D]_{1 \leq r_{1}, r_{2} \leq p-1}$. Moreover, we have
\[\Delta_{{\mathcal{H}}^{(\lambda)}}(X)=X \otimes 1 + (1+\lambda X) \otimes X,\]
\[\Delta_{{\mathcal{H}}^{(\lambda)}}(Y)=Y \otimes 1 + (1+\lambda X) \otimes Y.\]
where $\Delta_{{\mathcal{H}}^{(\lambda)}}$ denotes the comultiplication map of the coordinate ring of ${\mathcal{H}}^{(\lambda)}$.

On the other hand,
\[\Delta_{U(G^{(\lambda)}_{R})}(T_{X})=T_{X}\otimes T_{1}+(T_{1}+\lambda T_{X})\otimes T_{X},\]
\[\Delta_{U(G^{(\lambda)}_{R})}(T_{Y})=T_{Y}\otimes T_{1} +(T_{1}+\lambda T_{X})\otimes T_{Y}.\]
where $\Delta_{U(G^{(\lambda)}_{R})}$ is the comultiplication map of the coordinate ring of  $U(G^{(\lambda)}_{R})$.

Therefore $\tilde{\chi}^{\#}$ is an $R$-Hopf algebra homomorphism. Moreover, 
\[e^{\#}(X)=i^{\#} \circ \tilde{\chi}^{\#}(X), \ \ \ e^{\#}(Y)=i^{\#} \circ \tilde{\chi}^{\#}(Y).\]
which implies the commutativity of the first diagram.

(2) A homomorphism of $R$-Hopf algebras $\tilde{\sigma}^{\#}:R[T_{X^{r_{1}}Y^{r_{2}}}, 1/\Delta]_{1 \leq r_{1}, r_{2} \leq p-1} \rightarrow R[X,Y, 1/1+\lambda X]$ defined by
\[T_{X^{r_{1}}Y^{r_{2}}} \mapsto X^{r_{1}}Y^{r_{2}} \]
is well-defined because $\tilde{\sigma}^{\#}(\Delta)=\displaystyle\prod_{r=0}^{p-1}(1+\lambda X)^{r}$ is invertible in $R[X,Y, 1+\lambda X]$ and $\tilde{\sigma}^{\#}$ is coalgebra homomorphism.
Moreover,
\[i^{\#}(T_{X^{r_{1}}Y^{r_{2}}})=e^{\#} \circ \tilde{\chi}^{\#}(T_{X^{r_{1}}Y^{r_{2}}}).\]
which implies the commutativity of the second diagram.
\end{proof}

\vspace{3mm}
\begin{corollary} Let $S$ be an $R$-scheme and $X$ a $G^{(\lambda)}_{R}$-torsor over $S$. Then the $G^{(\lambda)}_{R}$-torsor $X$ is cleft if and only if the class $[X]$ lies in $\Ker[H^{1}(S,G^{(\lambda)}_{R}) \rightarrow H^{1}(S,{\mathcal H}^{(\lambda)}_{R})]$.
\end{corollary}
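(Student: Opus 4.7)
The plan is to deduce this directly from Corollary 1.3 applied to the pair $(\varGamma,G)=(G^{(\lambda)}_R,\mathcal{H}^{(\lambda)}_R)$, together with the standard exact sequence of pointed cohomology sets coming from the closed embedding $e:G^{(\lambda)}_R\hookrightarrow\mathcal{H}^{(\lambda)}_R$.

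First I would verify that the hypotheses of Corollary 1.3 are satisfied in our situation. The closed embedding $e:G^{(\lambda)}_R\to\mathcal{H}^{(\lambda)}_R$ is clear from the definition of $G^{(\lambda)}_R$ as the Frobenius kernel, and $i:G^{(\lambda)}_R\to U(G^{(\lambda)}_R)$ is the canonical one described in Subsection 3.1. The two commutative diagrams required by Corollary 1.3 are exactly the content of Theorem 3.4, via the morphisms $\tilde{\chi}$ and $\tilde{\sigma}$ constructed there. Thus Corollary 1.3 applies and tells us that a $G^{(\lambda)}_R$-torsor $X$ over $S$ is cleft if and only if there is a Cartesian diagram
\[\begin{CD}
X @>>> \mathcal{H}^{(\lambda)}_R \\
@VVV @VVV \\
S @>>> \mathcal{H}^{(\lambda)}_R/G^{(\lambda)}_R.
\end{CD}\]

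Next I would translate this Cartesian condition into the cohomological statement. The short exact sequence of fppf sheaves of groups
\[1\longrightarrow G^{(\lambda)}_R\overset{e}\longrightarrow\mathcal{H}^{(\lambda)}_R\longrightarrow\mathcal{H}^{(\lambda)}_R/G^{(\lambda)}_R\longrightarrow 1\]
yields the usual exact sequence of pointed sets
\[\mathcal{H}^{(\lambda)}_R(S)\to(\mathcal{H}^{(\lambda)}_R/G^{(\lambda)}_R)(S)\overset{\delta}\to H^1(S,G^{(\lambda)}_R)\to H^1(S,\mathcal{H}^{(\lambda)}_R).\]
By the concrete description of the connecting map $\delta$, a class $[X]\in H^1(S,G^{(\lambda)}_R)$ lies in the image of $\delta$ if and only if $X$ is obtained as the pullback of the $G^{(\lambda)}_R$-torsor $\mathcal{H}^{(\lambda)}_R\to\mathcal{H}^{(\lambda)}_R/G^{(\lambda)}_R$ along some $S$-point of the quotient, which is precisely the Cartesian diagram condition above. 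By exactness, this image coincides with $\Ker[H^1(S,G^{(\lambda)}_R)\to H^1(S,\mathcal{H}^{(\lambda)}_R)]$. Combining the two equivalences finishes the proof.

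There is no real obstacle: the work has already been done in Theorem 3.4 (which supplies the hypotheses of Corollary 1.3) and in the general Corollary 1.3 itself. The only subtlety worth flagging is that $\mathcal{H}^{(\lambda)}_R$ is non-commutative, so one must invoke the exact sequence of pointed sets in the non-abelian fppf cohomology setting rather than a long exact sequence of abelian groups; but the identification of the kernel with the image of the connecting map $\delta$ (and hence with torsors pulled back from $\mathcal{H}^{(\lambda)}_R/G^{(\lambda)}_R$) is standard.
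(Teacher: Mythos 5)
Your proposal is correct and rests on the same inputs as the paper's proof: the two commutative diagrams of Theorem 3.3 (which you cite as Theorem 3.4) together with the standard formalism of exact sequences of pointed sets in non-abelian fppf cohomology. The paper simply packages the argument more directly—applying $H^{1}(S,-)$ to the two diagrams and comparing $\Ker[H^{1}(S,G^{(\lambda)}_{R})\to H^{1}(S,U(G^{(\lambda)}_{R}))]$ with $\Ker[H^{1}(S,G^{(\lambda)}_{R})\to H^{1}(S,{\mathcal H}^{(\lambda)}_{R})]$ via the two inclusions—rather than routing through the Cartesian-diagram criterion of Corollary 1.3 and the connecting map, but this is the same argument in different clothing.
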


\vspace{3mm}
\begin{proof} By Theorem 3.3, (1), we obtain a commutative diagram of pointed sets
\[\begin{CD}
 H^{1}(S,G^{(\lambda)}_{R}) @>{i}>> H^{1}(S,U(G^{(\lambda)}_{R})) \\
 @\vert  @VV{\tilde{\chi}}V \\
 H^{1}(S,G^{(\lambda)}_{R}) @>>{e}> H^{1}(S,{\mathcal H}^{(\lambda)}_{R})
\end{CD}\]
(cf. Demazure-Gabriel [1,Ch III, Prop.4.6]).
From this diagram we deduce that \[\Ker[H^{1}(S,G^{(\lambda)}_{R})\rightarrow  H^{1}(S,U(G^{(\lambda)}_{R}))]\subset \Ker[H^{1}(S,G^{(\lambda)}_{R})\rightarrow H^{1}(S,{\mathcal H}^{(\lambda)}_{R})].\]
On the other hand, by Theorem 3.3, (2), we obtain another commutative diagram of pointed sets
\[\begin{CD}
 H^{1}(S,G^{(\lambda)}_{R}) @>{i}>> H^{1}(S,U(G^{(\lambda)}_{R})) \\
 @\vert  @AA{\tilde{\sigma}}A \\
 H^{1}(S,G^{(\lambda)}_{R}) @>>{e}> H^{1}(S,{\mathcal H}^{(\lambda)}_{R}).
\end{CD}\] which yields the reverse inclusion.
 \[ \Ker[H^{1}(S,G^{(\lambda)}_{R})\rightarrow H^{1}(S,{\mathcal H}^{(\lambda)}_{R})]\subset \Ker[H^{1}(S,G^{(\lambda)}_{R})\rightarrow  H^{1}(S,U(G^{(\lambda)}_{R}))]\].
Combining the two inclusions gives the desired equivalence.
\end{proof}
\vspace{3mm}

\begin{corollary} Let $R$ be an $\F_{p}$-algebra and $\lambda \in R$. Put $G^{(\lambda)}_{R}=\Ker[F:{\mathcal {H}}^{(\lambda)}_{R} \rightarrow {\mathcal {H}}^{(\lambda^{p})}_{R}]$ and set $S=\Spec\,R$. Then a $G^{(\lambda)}_{R}$-torsor $X$ over $S$ is cleft if and only if there exist morphisms 
$X \rightarrow {\mathcal H}^{(\lambda)}_{R}$ and $S \rightarrow {\mathcal {H}}^{(\lambda^{p})}_{R}$ such that the diagram 
\[\begin{CD}
 X @>>> {\mathcal H}^{(\lambda)}_{R} \\
 @VVV  @VV{F}V \\
 S @>>> {\mathcal H}^{(\lambda^{p})}_{R}
\end{CD}\]
is cartesian.
\end{corollary}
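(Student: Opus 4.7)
The plan is to combine Corollary 3.4 with the long exact cohomology sequence attached to the short exact sequence defining $G^{(\lambda)}_R$ as the kernel of Frobenius. By Corollary 3.4, a $G^{(\lambda)}_R$-torsor $X$ over $S$ is cleft if and only if $[X]$ lies in $\Ker[H^{1}(S, G^{(\lambda)}_R) \to H^{1}(S, {\mathcal H}^{(\lambda)}_R)]$, so the task reduces to identifying this kernel with the class of torsors fitting into the displayed cartesian square.

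First I would check that $F : {\mathcal H}^{(\lambda)}_R \to {\mathcal H}^{(\lambda^p)}_R$ is faithfully flat. Since $1 + \lambda^{p} X^{p} = (1+\lambda X)^{p}$ in characteristic $p$, the comorphism sends the unit $1/(1+\lambda^{p}X)$ to $1/(1+\lambda X)^{p}$ and is well defined; moreover $R[X, Y, 1/(1+\lambda X)]$ is free as a module over its image with basis $\{X^{i}Y^{j} : 0 \le i, j \le p-1\}$. Hence $F$ is finite locally free of rank $p^{2}$, and in particular faithfully flat. Combined with $G^{(\lambda)}_R = \Ker F$, this yields a short exact sequence of fppf group sheaves on $S$:
\[1 \longrightarrow G^{(\lambda)}_R \longrightarrow {\mathcal H}^{(\lambda)}_R \xrightarrow{\ F\ } {\mathcal H}^{(\lambda^{p})}_R \longrightarrow 1.\]

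Next I would invoke the associated long exact sequence of pointed sets (Demazure--Gabriel, Ch.~III),
\[{\mathcal H}^{(\lambda)}_R(S) \longrightarrow {\mathcal H}^{(\lambda^{p})}_R(S) \xrightarrow{\ \delta\ } H^{1}(S, G^{(\lambda)}_R) \longrightarrow H^{1}(S, {\mathcal H}^{(\lambda)}_R),\]
whose exactness at $H^{1}(S, G^{(\lambda)}_R)$ yields $\Ker[H^{1}(S, G^{(\lambda)}_R) \to H^{1}(S, {\mathcal H}^{(\lambda)}_R)] = \mathrm{image}(\delta)$. The boundary $\delta$ assigns to a morphism $s : S \to {\mathcal H}^{(\lambda^{p})}_R$ the fibre product $X_{s} := S \times_{{\mathcal H}^{(\lambda^{p})}_R} {\mathcal H}^{(\lambda)}_R$, which is a $G^{(\lambda)}_R$-torsor over $S$ because $F$ is faithfully flat with kernel $G^{(\lambda)}_R$. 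Thus $[X] \in \mathrm{image}(\delta)$ precisely when there exist $s$ and a $G^{(\lambda)}_R$-equivariant isomorphism $X \cong X_{s}$ over $S$, which is exactly the assertion that a cartesian square of the stated form exists. Combining this identification with Corollary 3.4 gives the desired equivalence.

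The main subtlety, modulo standard fppf cohomology, is the verification that $F$ is fppf-surjective and that the image of $\delta$ admits the fibre-product description; both points reduce to the faithful flatness of $F$ together with $G^{(\lambda)}_R = \Ker F$, after which the remainder of the argument is an unwinding of definitions.
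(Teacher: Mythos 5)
Your argument is correct and is essentially the route the paper intends: the paper states this corollary without proof, deriving it implicitly from Theorem 3.3 together with the general sculpture/embedding framework (Corollary 1.3), i.e.\ from Corollary 3.4 plus the exact sequence $1 \to G^{(\lambda)}_{R} \to {\mathcal H}^{(\lambda)}_{R} \xrightarrow{F} {\mathcal H}^{(\lambda^{p})}_{R} \to 1$ that the paper itself invokes in Corollary 3.6(b). Your explicit verification that $F$ is finite locally free of rank $p^{2}$ and the identification of $\Ker[H^{1}(S,G^{(\lambda)}_{R})\to H^{1}(S,{\mathcal H}^{(\lambda)}_{R})]$ with the image of the boundary map, hence with fibre products along $F$, is exactly the standard unwinding the paper leaves to the reader.
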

\vspace{3mm}
\begin{corollary} Under the notation of Corollary 3.5, the following conditions are equivalent:

(a) Every $G^{(\lambda)}_{R}$-torsor over $R$ is cleft.

(b) The map ${\mathcal{H}}^{(\lambda^{p})}_{R}(R) \rightarrow H^{1}(R, G^{(\lambda)}_{R})$ induced by the exact sequence
\[0 \longrightarrow G^{(\lambda)}_{R} \longrightarrow {\mathcal{H}}^{(\lambda)}_{R} \longrightarrow {\mathcal {H}}^{(\lambda^{p})}_{R} \longrightarrow 0\]
is surjective.

(c) The map $H^{1}(R, {\mathcal H}^{(\lambda)})\rightarrow H^{1}(R, {\mathcal H}^{(\lambda^{p})})$ induced by the Frobenius map $F: {\mathcal H}^{(\lambda)} \rightarrow {\mathcal H}^{(\lambda^{p})}$
is injective.
\end{corollary}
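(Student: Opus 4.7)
My plan is to deduce all three equivalences from the long exact sequence of pointed sets in fppf cohomology attached to the short exact sequence
\[ 0 \longrightarrow G^{(\lambda)}_{R} \longrightarrow {\mathcal H}^{(\lambda)}_{R} \overset{F}\longrightarrow {\mathcal H}^{(\lambda^{p})}_{R} \longrightarrow 0, \]
namely
\[ {\mathcal H}^{(\lambda)}_{R}(R) \longrightarrow {\mathcal H}^{(\lambda^{p})}_{R}(R) \overset{\delta}\longrightarrow H^{1}(R, G^{(\lambda)}_{R}) \longrightarrow H^{1}(R,{\mathcal H}^{(\lambda)}_{R}) \longrightarrow H^{1}(R, {\mathcal H}^{(\lambda^{p})}_{R}). \]
This sequence is available because $G^{(\lambda)}_{R}=\Ker F$ is a normal subgroup scheme of ${\mathcal H}^{(\lambda)}_{R}$; exactness at each term is the usual image-equals-kernel statement for pointed sets, where by kernel one means the preimage of the distinguished element (cf.\ Demazure-Gabriel, Ch.\ III).

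For (a)$\Leftrightarrow$(b) I would invoke Corollary 3.4, which identifies cleft $G^{(\lambda)}_{R}$-torsors over $\Spec R$ with the set $\Ker[H^{1}(R,G^{(\lambda)}_{R})\to H^{1}(R,{\mathcal H}^{(\lambda)}_{R})]$. Thus (a) is equivalent to this kernel coinciding with all of $H^{1}(R,G^{(\lambda)}_{R})$. By exactness at $H^{1}(R,G^{(\lambda)}_{R})$ this kernel equals the image of $\delta$, so (a) amounts to $\delta$ being surjective, which is precisely (b).

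For (b)$\Leftrightarrow$(c), surjectivity of $\delta$ forces the map $H^{1}(R,G^{(\lambda)}_{R})\to H^{1}(R,{\mathcal H}^{(\lambda)}_{R})$ to send everything to the distinguished element, and the converse holds as well. By exactness at $H^{1}(R,{\mathcal H}^{(\lambda)}_{R})$ this image coincides with $\Ker[H^{1}(R,{\mathcal H}^{(\lambda)}_{R})\to H^{1}(R,{\mathcal H}^{(\lambda^{p})}_{R})]$, so its triviality is equivalent to the Frobenius-induced map on $H^{1}$ having trivial kernel, i.e.\ to condition (c).

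The main subtlety I expect is the non-commutative setting: since ${\mathcal H}^{(\lambda)}_{R}$ is non-abelian, $H^{1}(R,{\mathcal H}^{(\lambda)}_{R})$ is merely a pointed set, so ``injective'' in (c) must be interpreted as having trivial kernel in the pointed-set sense, and exactness must be used in the non-abelian form. Once the normality of $G^{(\lambda)}_{R}$ in ${\mathcal H}^{(\lambda)}_{R}$ and this convention are fixed, the entire proof reduces to a direct diagram chase along the above five-term sequence.
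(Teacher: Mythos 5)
The paper states this corollary without proof, and your argument --- combining Corollary 3.4 with the long exact sequence of pointed sets attached to $0 \to G^{(\lambda)}_{R} \to {\mathcal H}^{(\lambda)}_{R} \to {\mathcal H}^{(\lambda^{p})}_{R} \to 0$ and chasing exactness at $H^{1}(R,G^{(\lambda)}_{R})$ and at $H^{1}(R,{\mathcal H}^{(\lambda)}_{R})$ --- is exactly the intended deduction. Your caveat about reading ``injective'' in (c) as ``trivial kernel of a map of pointed sets'' is the right one to flag, and it is harmless here because the identification $H^{1}(R,{\mathcal H}^{(\lambda)}_{R})\cong H^{1}(R,{\mathcal G}^{(\lambda)}_{R})$ from Remark 3.7 (valid since the higher fppf cohomology of $\G_{a,R}$ vanishes on affines) is compatible with Frobenius and turns the map in (c) into a homomorphism of abelian groups, for which trivial kernel and injectivity coincide.
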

\vspace{3mm}
\begin{rem} From the exact sequence of group schemes
\[0 \longrightarrow \G_{a,R} \overset{i}\longrightarrow {\mathcal H}^{(\lambda)}_{R} \overset{epi}\longrightarrow {\mathcal G}^{(\lambda)}_{R}\longrightarrow 0,\]
we obtain a long exact sequence of pointed sets

\[0 \longrightarrow \G_{a,R}(R) \longrightarrow {\mathcal {H}}^{(\lambda)}_{R}(R) \longrightarrow {\mathcal {G}}^{(\lambda)}_{R}(R) \longrightarrow H^{1}(R, \G_{a,R}) \longrightarrow H^{1}(R, {\mathcal H}^{(\lambda)}_{R}) \longrightarrow H^{1}(R, {\mathcal G}^{(\lambda)}_{R}).\]

If $R$ is a local ring or if $\lambda$ is nilpotent, then $H^{1}(R, {\mathcal H}^{(\lambda)}_{R})=0$ since both $H^{1}(R, \G_{a,R})=0$ and $H^{1}(R, {\mathcal G}^{(\lambda)}_{R})=0$ (\cite{SS}, Cor 1.3). It follows that every $G^{(\lambda)}_{R}$-torsor over $R$ is cleft.  
\end{rem}

\vspace{3mm} By Corollary 3.4, however we can show that non-cleft $G^{(\lambda)}_{R}$-torsor do exist. 

\vspace{3mm}
\begin{example} There exists an $\F_{p}$-algebra $R$ and $\lambda \in R$ such that the map $H^{1}(R, {\mathcal{H}}^{(\lambda)}_{R}) \rightarrow H^{1}(R, {\mathcal{ H}}^{(\lambda^{p})}_{R})$  induced by the Frobenius morphism, is not injective.
Indeed,  consider the commutative diagram with exact rows:

\[\begin{CD}
 0 @>>> \G_{a,R} @>i>> {\mathcal H}^{(\lambda)}_{R}  @>>> {\mathcal G}^{(\lambda)} @>>> 0\\
 @.     @VVFV     @VVFV    @VVFV  \\
 0 @>>> \G_{a,R} @>i>> {\mathcal H}^{(\lambda^{p})}_{R} @>>> {\mathcal G}^{(\lambda^{p})} @>>> 0.
\end{CD}\]

which yields a commutative diagram of pointed sets with exact rows:

\[\begin{CD}
  H^{1}(R,\G_{a,R}) @>i>> H^{1}(R,{\mathcal H}^{(\lambda)}_{R})  @>>> H^{1}(R,{\mathcal G}^{(\lambda)}) @>>>  H^{2}(R,\G_{a,R})\\
 @VVFV     @VVFV     @VVFV    @VVFV   \\
 H^{1}(R,\G_{a,R}) @>i>> H^{1}(R,{\mathcal H}^{(\lambda^{p})}_{R}) @>>> H^{1}(R,{\mathcal G}^{(\lambda^{p})}) @>>>  H^{2}(R,\G_{a,R}).
\end{CD}\]

Moreover, since $H^{i}(R,\G_{a,R})=0$ for $i \geq 1$, it follows that $H^{1}(R,{\mathcal H}^{(\lambda)}_{R})=H^{1}(R,{\mathcal G}^{(\lambda)}_{R})$.
Now assume 
\[R=\F_{p}[X, Y, \frac{1}{Y^{p}+(X+1)^{p}Y+X^{p}}]\]
and $\lambda=X+1$. Then the map $H^{1}(R, {\mathcal {G}}^{(\lambda)}) \rightarrow H^{1}(R, {\mathcal {G}}^{(\lambda^{p})})$ is not injective ([15], Example 4.6).
Consequently, the induced map $H^{1}(R, {\mathcal {H}}^{(\lambda)}_{R}) \rightarrow H^{1}(R, {\mathcal {H}}^{(\lambda^{p})}_{R})$ is also not injective.
\end{example}
\vspace{3mm}

\begin{notation} Let $R$ be a commutative ring and $\lambda \in R$, put $D_{(\lambda, a)}=R[U,V,1/a+\lambda U]$ for $a \in R$. $H^{(\lambda)}_{R}$ denotes the coordinate ring of ${\mathcal{ H}}^{(\lambda)}_{R}$. 
Then
$D_{(\lambda,a)}$ is a right ${H}^{(\lambda)}_{R}$-comodule algebra with structured map 
${H}^{(\lambda)}_{R}$- 
\[\rho:R[U,V,\frac{1}{a+\lambda U}] \rightarrow R[U,V,\frac{1}{a+\lambda U}]\otimes R[X,Y,\frac{1}{1+\lambda X}]:\] defined by
\[U \mapsto U\otimes 1 +(a+\lambda U)\otimes X, \ \ V \mapsto V \otimes 1 +(a+\lambda U)\otimes Y.\]
We put $X_{(\lambda,a)}=\Spec\,D_{(\lambda,a)}$. 
Furthermore, assume that $R$ is an $\F_{p}$-algebra. We put $\tilde{D}_{(\lambda,a,c,d)}=R[U,V]/(X^{p}-c, Y^{p}-d)$ for $c,d\in R$. 
$A^{(\lambda)}_{R}$ denotes the coordinate ring of $G^{(\lambda)}_{R}$. 
Then $\tilde{D}_{(\lambda,a,c,d)}$ is a right $A^{(\lambda)}_{R}$-comodule algebra defined by
a right $A^{(\lambda)}_{R}$-comodule structure map
\[\rho:R[U,V,\frac{1}{a+\lambda U}] \rightarrow R[U,V,\frac{1}{a+\lambda U}]\otimes R[X,Y,\frac{1}{1+\lambda X}]:\]
\[U \mapsto U\otimes 1 +(a+\lambda U)\otimes X, \ \ V \mapsto V \otimes 1 +(a+\lambda U)\otimes Y.\]
We put $\tilde{X}_{(\lambda,a,c,d)}=\Spec\,\tilde{D}_{(\lambda,a,c,d)}$.
\end{notation}
\vspace{3mm}

\begin{prop} Under the above notation, the following assertions hold.\\
(1) If $a$ is invertible in $R/(\lambda)$, then $X_{(\lambda,a)}$ is an ${\mathcal H}^{(\lambda)}_{R}$-torsor over $R$.\\
(2) Assume that $R$ is an $\F_{p}$-algebra. If $a^{p}+\lambda^{p}c$ is invertible in $R$, then $\tilde{X}_{(\lambda,a,c,d)}$ is a $G^{(\lambda)}_{R}$-torsor over $R$. Moreover the contracted product $\tilde{X}_{(\lambda,a,c,d)} \vee^{G^{(\lambda)}_{R}} {\mathcal {H}}^{(\lambda)}_{R}$ is isomorphic to $X_{(\lambda,a)}$ as a right ${\mathcal{H}}^{(\lambda)}_{R}$-torsor.  
\end{prop}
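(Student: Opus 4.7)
The plan is to handle (1) and (2) by verifying the torsor axioms directly, then obtain the contracted-product statement by producing an $\mathcal{H}^{(\lambda)}_R$-equivariant morphism between the two torsors and invoking the general fact that any such morphism between torsors is automatically an isomorphism.

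For (1), I would first check that $\rho$ is a well-defined $R$-algebra (and right comodule) homomorphism: the key observation is $\rho(a+\lambda U)=(a+\lambda U)\otimes(1+\lambda X)$, a product of units, so the localization at $a+\lambda U$ is respected; coassociativity and the counit axiom then reduce to the identities built into the Hopf-algebra structure of $H^{(\lambda)}_R$. Faithful flatness of $D_{(\lambda,a)}$ over $R$ follows from localization-flatness plus the following surjectivity check: for every prime $\mathfrak{p}\subset R$ the polynomial $a+\lambda U$ must be nonzero in $k(\mathfrak{p})[U]$, which is automatic if $\lambda\notin\mathfrak{p}$, while if $\lambda\in\mathfrak{p}$ the hypothesis that $a$ is a unit in $R/(\lambda)$ forces $a\notin\mathfrak{p}$. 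Finally, for the torsor isomorphism I would write down an explicit inverse of the canonical map
\[
\phi\colon D_{(\lambda,a)}\otimes_R D_{(\lambda,a)}\longrightarrow D_{(\lambda,a)}\otimes_R H^{(\lambda)}_R,\qquad d_1\otimes d_2\mapsto (d_1\otimes 1)\,\rho(d_2),
\]
given on generators by $U\mapsto U\otimes 1$, $V\mapsto V\otimes 1$ in the first factor and
\[
X\mapsto\frac{1\otimes U-U\otimes 1}{a+\lambda(U\otimes 1)},\qquad Y\mapsto\frac{1\otimes V-V\otimes 1}{a+\lambda(U\otimes 1)};
\]
well-definedness comes down to $1+\lambda X\mapsto (a+\lambda\cdot 1\otimes U)/(a+\lambda\cdot U\otimes 1)$, a ratio of units, and the inversion is then a routine check on generators.

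Part (2) runs in parallel, with two adjustments. To see that $\rho$ descends to $\tilde D_{(\lambda,a,c,d)}=R[U,V]/(U^p-c,V^p-d)$, I apply the characteristic-$p$ Frobenius to the commutative ring $\tilde D_{(\lambda,a,c,d)}\otimes A^{(\lambda)}_R$:
\[
(U\otimes 1+(a+\lambda U)\otimes X)^p=U^p\otimes 1+(a^p+\lambda^p U^p)\otimes X^p=c\otimes 1
\]
since $X^p=0$ in $A^{(\lambda)}_R$, and symmetrically for $V$. Faithful flatness is now free of rank $p^2$ with basis $\{U^iV^j\}_{0\le i,j<p}$, and the invertibility of $a+\lambda U$ in $\tilde D_{(\lambda,a,c,d)}$ --- needed to invert the canonical map exactly as in Step~(1) --- is guaranteed by the hypothesis that $(a+\lambda U)^p=a^p+\lambda^p c$ is a unit in $R$.

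For the contracted-product claim I would proceed as follows. The surjection $D_{(\lambda,a)}\twoheadrightarrow\tilde D_{(\lambda,a,c,d)}$, well-defined because $a+\lambda U$ is a unit in $\tilde D_{(\lambda,a,c,d)}$, corresponds to a closed immersion $\iota\colon \tilde X_{(\lambda,a,c,d)}\hookrightarrow X_{(\lambda,a)}$, and a direct check on the coaction formulas shows $\iota$ is $G^{(\lambda)}_R$-equivariant when $G^{(\lambda)}_R$ acts on $X_{(\lambda,a)}$ through the restriction of the $\mathcal{H}^{(\lambda)}_R$-action along $G^{(\lambda)}_R=\Ker F\hookrightarrow \mathcal{H}^{(\lambda)}_R$. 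Therefore the morphism
\[
\mu\colon \tilde X_{(\lambda,a,c,d)}\times_R \mathcal{H}^{(\lambda)}_R\longrightarrow X_{(\lambda,a)},\qquad (\xi,h)\mapsto \iota(\xi)\cdot h,
\]
is $\mathcal{H}^{(\lambda)}_R$-equivariant in $h$ and invariant under the diagonal action $(\xi,h)\cdot g=(\xi g,g^{-1}h)$, so it descends to an $\mathcal{H}^{(\lambda)}_R$-equivariant morphism $\tilde X_{(\lambda,a,c,d)}\vee^{G^{(\lambda)}_R}\mathcal{H}^{(\lambda)}_R\to X_{(\lambda,a)}$. Since source and target are both $\mathcal{H}^{(\lambda)}_R$-torsors over $R$, this morphism becomes a translation after fppf-localization and is therefore an isomorphism. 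I expect the main obstacle to be bookkeeping: tracking the $G^{(\lambda)}_R$-action on $\mathcal{H}^{(\lambda)}_R$ (left translation by inverses) and verifying the $G^{(\lambda)}_R$-invariance of $\mu$ on coordinate rings, which ultimately reduces to coassociativity of the $\mathcal{H}^{(\lambda)}_R$-coaction on $D_{(\lambda,a)}$ composed with the Hopf projection $H^{(\lambda)}_R\twoheadrightarrow A^{(\lambda)}_R$.
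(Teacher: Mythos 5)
Your proposal is correct and follows essentially the same route as the paper: both parts are proved by exhibiting the explicit inverse of the canonical map $D_{(\lambda,a)}\otimes_R D_{(\lambda,a)}\to D_{(\lambda,a)}\otimes_R H^{(\lambda)}_R$ (resp. $\tilde D_{(\lambda,a,c,d)}\otimes_R\tilde D_{(\lambda,a,c,d)}\to\tilde D_{(\lambda,a,c,d)}\otimes_R A^{(\lambda)}_R$) together with the same faithful-flatness checks, and your map $\mu$ is precisely the geometric form of the comodule-algebra homomorphism $\phi\colon D_{(\lambda,a)}\to\tilde D_{(\lambda,a,c,d)}\otimes_R H^{(\lambda)}_R$ whose image the paper identifies with the cotensor product $\tilde D_{(\lambda,a,c,d)}\,\square_{A^{(\lambda)}_R}H^{(\lambda)}_R$ describing the contracted product. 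The only cosmetic difference is that you phrase the final step as descent of the action map followed by ``a morphism of torsors is an isomorphism,'' while the paper works dually with cotensor products of comodule algebras.
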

\vspace{3mm}
\begin{proof} (1) Consider the $R$-algebra homomorphism $r: D_{(\lambda, a)}\otimes_{R}D_{(\lambda, a)} \rightarrow D_{(\lambda, a)} \otimes_{R} B_{a}$ defined by
\[U\otimes 1 \mapsto U \otimes 1,\ \ \ V \otimes 1 \mapsto V \otimes 1,\]
\[1\otimes U \mapsto U\otimes 1+(1+\lambda U)\otimes X, \ \ \ 1 \otimes V \mapsto V \otimes 1+(1+\lambda U) \otimes Y.\]
is bijective. The inverse of $r$ is given by 
\[U\otimes 1 \mapsto U \otimes 1,\ \ \ V \otimes 1 \mapsto V \otimes 1,\]
\[1\otimes X \mapsto \frac{-U\otimes 1+1 \otimes U}{(a+\lambda U) \otimes 1}, \ \ \ 1 \otimes Y \mapsto \frac{-V \otimes 1+1\otimes V}{(a+\lambda U)\otimes 1}.\]

Hence, it remains to prove that $D_{(\lambda,a)}$ is faithfully flat over $R$. First note that $D_{(\lambda,a)}$ is flat over $R$ since $D_{(\lambda,a)}$ is a fraction ring of the polynomial ring $R[U,V]$. Next, observe that $D_{(\lambda,a)}\otimes_{R} R/(\lambda)=R[U,V, 1/a+\lambda U]\otimes_{R} R/(\lambda)
=R/(\lambda)[U,V]$ because $a$ is invertible in $R/(\lambda).$ On the other hand, $D_{(\lambda, a)} \otimes_{R} R[1/\lambda]$ is isomorphic, as an $R$-algebra $R[1/\lambda][X, Y, 1/X]$. Therefore, $D_{(\lambda,a)}$ is faithfully flat over $R$. 

(2) We first remark that $a+\lambda U$ is invertible in $\tilde{D}_{(\lambda,a,c,d)}$ since $(a+ \lambda U)^{p}=a^{p}+\lambda^{p}c$ is
invertible in $R$. Therefore the $R$-algebra homomorphism 
\[\tilde{r}: \tilde{D}_{(\lambda,a,c,d)} \otimes \tilde{D}_{(\lambda,a,c,d)} \rightarrow \tilde{D}_{(\lambda,a,c,d)} \otimes A^{(\lambda)}_{R}\]   
defined by 
\[U \otimes 1 \mapsto U \otimes 1 \ \ \ V \otimes 1 \mapsto V \otimes 1\]
\[1 \otimes U \mapsto U \otimes 1+(a+\lambda U)\otimes X \ \ \ 1\otimes V \mapsto V \otimes 1 +(a+\lambda U) \otimes Y\]
is bijective. Indeed, its inverse  is given by
\[U \otimes 1 \mapsto U \otimes 1, \ \ \ V\otimes 1 \mapsto V \otimes 1\]
\[1 \otimes \frac{1\otimes U- U \otimes 1}{(a+ \lambda U)\otimes 1}, \ \ \ 1 \otimes Y \mapsto \frac{1 \otimes V-V \otimes 1}{(a+\lambda U)\otimes Y}.\]
Moreover $\tilde{D}_{(\lambda,a,c,d)}$ is a free $R$-module. Therefore $\tilde{D}_{(\lambda,a,c,d)}$ is faithfully flat over $R$. Define an $R$-algebra homomorphism 
\[\phi: D_{(\lambda,a)} \rightarrow D_{(\lambda,a,c,d)} \otimes_{R} H^{(\lambda)}_{R}\]
by
\[\phi(U)=U\otimes 1+(a+\lambda U)\otimes X, \ \ \ \phi(V)=V \otimes 1 + (a+\lambda U) \otimes Y.\]
This is well-defined since $\phi(a+\lambda U)=(a+ \lambda U)\otimes (1+\lambda X) \in (D_{(\lambda,a,c,d)}\otimes_{R} H^{(\lambda)}_{R})^{\times}$. Moreover $\phi: D_{(\lambda,a)} \rightarrow D_{(\lambda,a,c,d)} \otimes_{R} H^{(\lambda)}_{R}$ is a homomorphism of right $H^{(\lambda)}_{R}$-comodule algebras. 
Now consider the cotensor product $D_{(\lambda,a,c,d)} \square_{A^{(\lambda)}_{R}} H^{(\lambda)}_{R}$ denotes the subalgebra
\[\left\{\displaystyle\sum_{i}a_{i}\otimes b_{i} \in D_{(\lambda,a,c,d)} \otimes H^{(\lambda)}_{R} ; \displaystyle\sum_{i} \rho(a_{i})\otimes b_{i}= \displaystyle\sum_{i}a_{i} \otimes \varDelta(b_{i})\right\},\]
where $\rho$ is the right $A^{(\lambda)}_{R}$-comodule structure map of $D_{(\lambda,a,c,d)}$ and $\varDelta$ is the left $A^{(\lambda)}_{R}$-comodule structure map of $H^{(\lambda)}_{R}$. This is a right $H^{(\lambda)}_{R}$-comodule algebra. 
We then obtain that ${\Im} \phi \subset D_{(\lambda,a,c,d)} \square_{A^{(\lambda)}_{R}} H^{(\lambda)}_{R}$. Finally, note that the right ${\mathcal {H}}^{(\lambda)}$-torsor $\Spec\,D_{(\lambda,a,c,d)} \square_{A^{(\lambda)}_{R}} H^{(\lambda)}_{R}$  coincides with the contracted product $\tilde{X}_{(\lambda,a,c,d)} \vee^{G^{(\lambda)}_{R}} {\mathcal {H}}^{(\lambda)}_{R}$.
Therefore, $\tilde{X}_{(\lambda,a,c,d)} \vee^{G^{(\lambda)}_{R}} {\mathcal H}^{(\lambda)}_{R}$ is isomorphic to $X_{(\lambda,a)}$ as right ${\mathcal {H}}^{(\lambda)}_{R}$-torsor.
\end{proof}
\vspace{3mm}

\begin{corollary} Let $R$ be an $\F_{p}$-algebra and let $\lambda,a,c,d \in R$. Assume that $a^{p}+\lambda^{p}c$ is invertible in $R$. Then the following conditions are equivalent.\\
(a) The $G^{(\lambda)}_{R}$-torsor   $\tilde{X}_{(\lambda,a,c,d)}$ is cleft.\\
(b) The ${\mathcal H}^{(\lambda)}_{R}$-torsor $X_{(\lambda,a)}$ is trivial.\\
(c) There exists $b \in R$ such that $a+\lambda b$ is invertible in $R$.
\end{corollary}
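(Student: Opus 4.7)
The plan is to prove (a)$\Leftrightarrow$(b) by combining Corollary 3.4 with Proposition 3.10 (2), and then to prove (b)$\Leftrightarrow$(c) by examining rational points directly.

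First I would observe that since $a^{p}+\lambda^{p}c$ is a unit in $R$, reduction modulo $\lambda$ shows that $a^{p}$, and hence $a$, is a unit in $R/(\lambda)$; both conclusions of Proposition 3.10 are therefore available. In particular, $X_{(\lambda,a)}$ is an ${\mathcal H}^{(\lambda)}_{R}$-torsor, $\tilde{X}_{(\lambda,a,c,d)}$ is a $G^{(\lambda)}_{R}$-torsor, and there is an isomorphism $\tilde{X}_{(\lambda,a,c,d)}\vee^{G^{(\lambda)}_{R}}{\mathcal H}^{(\lambda)}_{R}\cong X_{(\lambda,a)}$ of right ${\mathcal H}^{(\lambda)}_{R}$-torsors. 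Since contracted product along the closed embedding $e\colon G^{(\lambda)}_{R}\hookrightarrow {\mathcal H}^{(\lambda)}_{R}$ computes the induced pushforward $e_{*}\colon H^{1}(R,G^{(\lambda)}_{R})\to H^{1}(R,{\mathcal H}^{(\lambda)}_{R})$ in non-abelian cohomology, this isomorphism says precisely that $e_{*}[\tilde{X}_{(\lambda,a,c,d)}]=[X_{(\lambda,a)}]$. Corollary 3.4 characterises cleftness of a $G^{(\lambda)}_{R}$-torsor as membership in $\Ker\,e_{*}$, so (a) is equivalent to $[X_{(\lambda,a)}]=0$ in $H^{1}(R,{\mathcal H}^{(\lambda)}_{R})$, i.e.\ to (b).

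For (b)$\Leftrightarrow$(c), I would appeal to the standard fact that a torsor over $\Spec\,R$ is trivial if and only if it admits an $R$-rational point. An $R$-point of $X_{(\lambda,a)}=\Spec\,R[U,V,1/(a+\lambda U)]$ is an $R$-algebra homomorphism sending $U\mapsto b$ and $V\mapsto b'$ for some $b,b'\in R$ with $a+\lambda b\in R^{\times}$; since $b'$ is unconstrained, such a point exists if and only if some $b\in R$ makes $a+\lambda b$ a unit, which is condition (c).

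The main obstacle is conceptual rather than computational: one must recognise that Proposition 3.10 (2) is precisely what is needed to evaluate $e_{*}$ explicitly on the class of $\tilde{X}_{(\lambda,a,c,d)}$, and to remember that for first non-abelian cohomology the pushforward along a group homomorphism is given by contracted product. Once these two standard points are in hand, the corollary reduces to a short chain of equivalences and no further calculation is required.
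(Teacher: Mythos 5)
Your proposal is correct and follows exactly the route the paper intends; the paper in fact states this corollary without writing out a proof, and your argument supplies precisely the omitted steps: (a)$\Leftrightarrow$(b) by combining Corollary 3.4 with the contracted-product computation of Proposition 3.10(2) (noting, as you do, that invertibility of $a^{p}+\lambda^{p}c$ makes $a$ a unit in $R/(\lambda)$ so both parts of that proposition apply), and (b)$\Leftrightarrow$(c) by observing that triviality of the affine torsor $X_{(\lambda,a)}=\Spec\,R[U,V,1/(a+\lambda U)]$ amounts to the existence of an $R$-point, i.e.\ of $b\in R$ with $a+\lambda b\in R^{\times}$. No gaps.
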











\vspace{5mm}
\section{A calculation of the quotient $U(G^{(\lambda)}_{R}) / G^{(\lambda)}_{R}$ }\label{sec:quotient}

To calculate the quotient $U(G^{(\lambda)}_{R}) / G^{(\lambda)}_{R}$, we recall the results of Doi and Takeuchi. Let $R$ be a commutative ring, $H$ an $R$-Hopf algebra, and $A$ a right $H$-comodule $R$-algebra with $H$-comodule algebra structure $\rho: A \rightarrow A \otimes H$).
We denote by $A^{\mathrm{co}H}=\{a \in A | \rho(a)=a \otimes 1\}$. The coinvariant subalgebra of $A$ under the right $H$-coaction is defined as $A$.
If there exists an $R$-linear map $\phi: H \rightarrow A$ that is also a homomorphism of right $H$-comodules and invertible with respect to the convolution product, then the extension $A/A^{\mathrm{co}H}$ of $R$-algebras is called a cleft extension ([12]). Moreover, the map $\phi$ is called a cleaving map of $A$. 
In this situation, the left $A^{\mathrm{co}H}$-module homomorphism $\Phi : A^{\mathrm{co}H} \otimes H \rightarrow A$ defined by $b \otimes h \mapsto b\phi(h) $ is bijective. Furthermore, define an $R$-linear map $P : A \rightarrow A$ by $a \mapsto \sum a_{(0)}\phi^{-1}(a_{(1)})$ for $a \in A.$ Then,$P(A) \subset A^{\mathrm{co}H}$. Furthermore, \[\Phi^{-1}(a)=\sum P(a_{(0)})\otimes a_{(1)}  \ \ \ \ \ \ \ \ \ \   (*)\] (cf. [2, Theorem 9]).

\vspace{3mm}
The following result is important for the calculation of the quotient $U(G^{(\lambda)}_{R}) / G^{(\lambda)}_{R}$.
\vspace{3mm}

Let $S$ be a scheme and $\varGamma$ an affine commutative group $S$-scheme such that $\mathcal{O}_{\varGamma}$ is a locally free $\mathcal{O}_{S}$-module of finite rank. Then, via the natural closed immersion $i: \varGamma \rightarrow U(\varGamma)$, $U(\varGamma)$ is a cleft $\varGamma$-torsor over $U(\varGamma)/\varGamma$ (see [11, Proposition 3.1]).

\vspace{3mm}

If $S= \Spec \ R$ and $\varGamma= \Spec \ H$, where $H$ is a free $R$-module of finite rank, then, by choosing a basis$\{e_{1}, \dots, e_{n}\}$ of $H$ over $R$, the canonical closed immersion $i: \varGamma \rightarrow U(\varGamma)$ is determined by

\[i^{\#} : R[T_{e_{1}}, \dots, T_{e_{n}}, \frac{1}{D}]\rightarrow H,\]
\[T_{e_{i}} \mapsto e_{i} \ \ \text{for} \ \ 1 \leq i \leq n.\]
Through this $R$-algebra homomorphism $i^{\#}$ which defines the canonical closed immersion $i$, the algebra ${S(H)_{\Theta}}$ acquires the structure of a right $H$-comodule algebra. Moreover, the $R$-linear map \[\phi: H \rightarrow  {S(H)_{\Theta}}\]
\[e_{i} \mapsto T_{e_{i}}  \ \ \text{for} \ \ 1 \leq i \leq n.\]
is a cleaving map of ${S(H)_{\Theta}}$.
Furthermore, we see that $U(\varGamma)/\varGamma$ is isomorphic to $\Spec \ {S(H)_{\Theta}}^{\mathrm{co}H}$ as $R$-schemes since $H$ is finite over $R$.  (cf. [3, Part I, 5.6]) .

\vspace{3mm}

In this section, $R$ denotes an $\F_{p}$-algebra.

\vspace{3mm}

We now consider $H^{(\lambda)}_R$. The canonical closed immersion $i: H^{(\lambda)}_R \rightarrow U(H^{(\lambda)}_R)$ is determined by the

\[ i^{\#}: R[T_{X^{r_{1}}Y^{r_{2}}}, \frac{1}{\varDelta}  ]_{0 \leq r_{1} \leq p-1, 0 \leq r_{2} \leq p-1} \rightarrow R[X,Y]/(X^{p}, Y^{p}),\]
\[ T_{X^{r_{1}}Y^{r_{2}}}  \mapsto  X^{r_{1}}Y^{r_{2}}   \  \text{for} \  0 \leq r_{1} \leq p-1, 0 \leq r_{2} \leq p-1. \]

$R$-Hopf algebras homomorphism $i^{\#}$ which defines the canonical closed immersion $j$, the algebra $S(A_{R}^{(\lambda)})_{\Theta}$ acquires the structure of a right $A_{R}^{(\lambda)}$-comodule algebra. 
Moreover, the $R$-linear map \[\phi: R[X,Y]/(X^{p}, Y^{p}) \rightarrow  R[T_{X^{r_{1}}Y^{r_{2}}}, \frac{1}{\varDelta}  ]_{0 \leq r_{1} \leq p-1, 0 \leq r_{2} \leq p-1},\]
\[X^{r_{1}}Y^{r_{2}} \mapsto  T_{X^{r_{1}}Y^{r_{2}}} \]
is a right $A_{R}^{(\lambda)}$-comodule homomorphism and is invertible with respect to the convolution product.

Therefore, the map is a homomorphism of left $S(A_{R}^{(\lambda)})_{\Theta}^{\mathrm{co}A_{R}^{(\lambda)}}$-modules and right $A_{R}^{(\lambda)}$-comodule,
\[\Phi: S(A_{R}^{(\lambda)})_{\Theta}^{\mathrm{co}A_{R}^{(\lambda)}} \otimes A_{R}^{(\lambda)} \rightarrow S(A_{R}^{(\lambda)})_{\Theta}, \]
\[b \otimes a \mapsto b\phi(a)\]
and it is bijective.

\vspace{3mm}

\begin{notation} We define the $R$-linear map $P_{(\lambda)}: S(A_{R}^{(\lambda)})_{\Theta} \rightarrow S(A_{R}^{(\lambda)})_{\Theta}$
defined by
\[a \mapsto  \sum a_{(0)}\phi^{-1}(a_{(1)})\]
\end{notation}

\vspace{3mm}

\begin{prop} The algebra ${S(A_{R}^{(\lambda)})_{\Theta}}^{\mathrm{co}A_{R}^{(\lambda)}}$ is the subalgebra of  $S(A_{R}^{(\lambda)})_{\Theta}^{\mathrm{co}A_{R}^{(\lambda)}}$ generated by the elements

\[T_{1}, T_{X}^{p}, T_{Y}^{p}, P_{(\lambda)}(T_{X}^{2}), \dots, P_{(\lambda)}(T_{X}^{p-1}),\]
\[P_{(\lambda)}(T_{X}T_{Y}), P_{(\lambda)}(T_{X}^{2}T_{Y}),\dots, P_{(\lambda)}(T_{X}^{p-1}T_{Y}),\]
\[P_{(\lambda)}(T_{X}T_{Y}^{2}), P_{(\lambda)}(T_{X}^{2}T_{Y}^{2}),\dots, P_{(\lambda)}(T_{X}^{p-1}T_{Y}^{2}),\]
\[\vdots\]
\[P_{(\lambda)}(T_{X}T_{Y}^{p-1}), P_{(\lambda)}(T_{X}^{2}T_{Y}^{p-1}),\dots, P_{(\lambda)}(T_{X}^{p-1}T_{Y}^{p-1}),\]
\[T_{1}^{-1}, \frac{1}{T_{1}^{p}+\lambda T_{X}^{p}}, \frac{(\displaystyle\sum_{k=0}^{s} {\binom{s}{k}}\lambda^{k}T_{X^{k}})}{(T_{1}+\lambda T_{X})^{s}}, 2 \le s \le p-1. \]
\end{prop}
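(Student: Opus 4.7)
The plan is to combine the cleft-extension framework recalled in this section with direct coaction computations.  By the discussion preceding the proposition, the map $\Phi:S(A_{R}^{(\lambda)})_{\Theta}^{\mathrm{co}A_{R}^{(\lambda)}}\otimes A_{R}^{(\lambda)}\to S(A_{R}^{(\lambda)})_{\Theta}$ sending $b\otimes a\mapsto b\phi(a)$ is bijective with inverse $\Phi^{-1}(s)=\sum P_{(\lambda)}(s_{(0)})\otimes s_{(1)}$, exhibiting $S(A_{R}^{(\lambda)})_{\Theta}$ as a free module over its coinvariant subring with basis $\{\phi(X^{r_{1}}Y^{r_{2}})=T_{X^{r_{1}}Y^{r_{2}}}\}_{0\le r_{1},r_{2}\le p-1}$.

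First I would verify that each listed element is coinvariant.  For $T_{1}$ this is immediate from $\Delta_{A_R^{(\lambda)}}(1)=1\otimes 1$; for $T_{X}^{p}$ and $T_{Y}^{p}$ one raises $\rho(T_{X})=T_{X}\otimes 1+(T_{1}+\lambda T_{X})\otimes X$ (and its $T_{Y}$-analogue) to the $p$-th power and invokes $X^{p}=Y^{p}=0$ in $A_{R}^{(\lambda)}$ together with the $\F_{p}$-algebra assumption.  The elements $P_{(\lambda)}(T_{X}^{r_{1}}T_{Y}^{r_{2}})$ lie in the coinvariants automatically, by the general property of the projection attached to a cleaving map.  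For the denominator-type elements, the crucial step is to establish, by induction on $s$ using the coaction formula of Proposition 3.1, the identity $\rho(\sigma_{s})=\sigma_{s}\otimes(1+\lambda X)^{s}$, where $\sigma_{s}:=\sum_{k=0}^{s}\binom{s}{k}\lambda^{k}T_{X^{k}}$.  This yields both $\sigma_{s}/\sigma_{1}^{s}\in S^{\mathrm{co}A_{R}^{(\lambda)}}$ for $s\ge 1$ and $\sigma_{1}^{p}\in S^{\mathrm{co}A_{R}^{(\lambda)}}$ (from $(1+\lambda X)^{p}=1$), while invertibility of $T_{1}$, $\sigma_{1}^{p}$, and $\sigma_{s}$ in $S(A_{R}^{(\lambda)})_{\Theta}$ is ensured by their dividing $D$.

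Next I would show that the listed elements generate $S(A_{R}^{(\lambda)})_{\Theta}^{\mathrm{co}A_{R}^{(\lambda)}}$ as an $R$-algebra.  Let $B$ denote the subalgebra of $S(A_{R}^{(\lambda)})_{\Theta}$ generated by the listed elements.  By the $\Phi$-bijectivity and the resulting uniqueness of basis expansion, it suffices to verify the equality $S(A_{R}^{(\lambda)})_{\Theta}=B\cdot\phi(A_{R}^{(\lambda)})$ of $R$-subrings of $S(A_{R}^{(\lambda)})_{\Theta}$:  indeed, any coinvariant $b$ would then satisfy $bT_{1}=\sum b_{i}\phi(a_{i})$ with $b_{i}\in B$, forcing $b=b_{1}\in B$ by the uniqueness of the $\Phi$-decomposition.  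The generators $T_{X^{r_{1}}Y^{r_{2}}}=\phi(X^{r_{1}}Y^{r_{2}})$ already lie in $\phi(A_{R}^{(\lambda)})$, so the task reduces to placing $1/D=\prod_{s=0}^{p-1}\sigma_{s}^{-p}$ inside $B$: the factors $\sigma_{0}^{-p}=T_{1}^{-p}$ and $\sigma_{1}^{-p}$ are supplied by the listed inverses, and for $2\le s\le p-1$ one writes $\sigma_{s}^{-p}=(\sigma_{1}^{s}/\sigma_{s})^{p}\cdot\sigma_{1}^{-sp}$, reducing to the invertibility of $\sigma_{s}/\sigma_{1}^{s}$ in $B$.

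The hardest step will be this last inversion --- showing $(\sigma_{s}/\sigma_{1}^{s})^{-1}\in B$ rather than merely in $S^{\mathrm{co}A_{R}^{(\lambda)}}$.  Because $\sigma_{s}/\sigma_{1}^{s}$ is already a unit of $S(A_{R}^{(\lambda)})_{\Theta}$, its inverse is automatically coinvariant, but exhibiting it as a polynomial in the listed generators will require explicit computation of $P_{(\lambda)}(T_{X}^{s})$ and of the mixed values $P_{(\lambda)}(T_{X}^{r_{1}}T_{Y}^{r_{2}})$ via the convolution inverse $\phi^{-1}$ of the cleaving map (a sample calculation gives, for instance, $P_{(\lambda)}(T_{X}^{2})=(T_{X}^{2}-T_{1}T_{X^{2}})/\sigma_{2}$), from which the desired inverse should emerge as an algebraic combination with the listed $\sigma$-denominators.
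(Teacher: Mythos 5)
Your setup matches the paper's: the bijection $\Phi$ with $\Phi^{-1}(s)=\sum P_{(\lambda)}(s_{(0)})\otimes s_{(1)}$, the coinvariance checks via $X^{p}=Y^{p}=0$ and the grouplike element $1+\lambda X$, and the identity
$T_{1}^{s-1}+\sum_{k=2}^{s-1}\binom{s}{k}\lambda^{k}T_{1}^{s-k}P_{(\lambda)}(T_{X}^{k})+\lambda^{s}P_{(\lambda)}(T_{X}^{s})=(T_{1}+\lambda T_{X})^{s}/\sigma_{s}$ (with $\sigma_{s}=\sum_{k=0}^{s}\binom{s}{k}\lambda^{k}T_{X^{k}}$) are all exactly the ingredients the paper uses, and the first half of your argument is sound. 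The gap is in the reduction step. You assert that it suffices to verify the equality $S(A_{R}^{(\lambda)})_{\Theta}=B\cdot\phi(A_{R}^{(\lambda)})$ ``of $R$-subrings'' and then check only that the algebra generators $T_{X^{r_{1}}Y^{r_{2}}}$ and $1/D$ lie in $B\cdot\phi(A_{R}^{(\lambda)})$. But $B\cdot\phi(A_{R}^{(\lambda)})=\Phi(B\otimes A_{R}^{(\lambda)})$ is only a $B$-submodule, not a subring: $\phi$ is not multiplicative (e.g.\ $\phi(X)^{2}=T_{X}^{2}\neq T_{X^{2}}=\phi(X^{2})$), so a product of two elements of $\phi(A_{R}^{(\lambda)})$ has no a priori reason to decompose with coefficients in $B$ rather than merely in the coinvariants. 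Equivalently, one must show that the crossed-product cocycle $\sum\phi(a_{(1)})\phi(a'_{(1)})\phi^{-1}(a_{(2)}a'_{(2)})$ takes values in $B$. This multiplicative closure is where essentially all of the paper's proof goes: an induction on $s$ expressing $T_{X^{s+1}}$ (and then $T_{X^{r_{1}}Y^{r_{2}}}$) through $P_{(\lambda)}(T_{X}^{s+1})$, the auxiliary elements $Q_{s+1}$, $Q'_{r_{1}r_{2}}$ and the $\sigma$-identities, so as to conclude that $\Phi^{-1}$ of every monomial in the $T$'s and $1/D$ has coefficients in $B$. Your proposal omits this entirely; the step you single out as hardest (inverting $\sigma_{s}/\sigma_{1}^{s}$ inside $B$) is in fact immediate from the displayed identity, whose right-hand side is visibly a polynomial in the listed generators.

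Without that closure you cannot even place $T_{X}^{2}$ or $T_{X}T_{Y}$ in $B\cdot\phi(A_{R}^{(\lambda)})$, so the reduction to ``$1/D\in B$'' fails as stated. A minor further difference: the paper concludes not by your uniqueness-of-expansion argument but by showing $\Phi':B\otimes A_{R}^{(\lambda)}\to S(A_{R}^{(\lambda)})_{\Theta}$ is bijective, factoring it through $\Phi$, and invoking faithful flatness of $A_{R}^{(\lambda)}$ over $R$ to deduce that the inclusion $B\hookrightarrow S(A_{R}^{(\lambda)})_{\Theta}^{\mathrm{co}A_{R}^{(\lambda)}}$ is onto; your ``multiply by $T_{1}=\phi(1)$ and compare coefficients'' variant would also work, but only after the closure issue is repaired.
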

\vspace{3mm}

\begin{proof} Let $B$ denote the subalgebra of $S(A_{R}^{(\lambda)})_{\Theta}$ generated by the elements specified above.  First, we clearly have $B \subset S(A_{R}^{(\lambda)})_{\Theta}^{\mathrm{co}A_{R}^{(\lambda)}}$.
Moreover, for $a_{k} \in B$ for $0 \leq k \leq p-1$ such that
\[\Phi^{-1}\Bigr(\frac{T_{1}^{s_{0}}T_{X}^{s_{1}}T_{X^{2}}^{s_{2}}\dots T_{X^{p-1}}^{s_{p-1}} } {(T_{1}+\lambda T_{X})^{t_{1}}\displaystyle\prod_{l=2}^{p-1}\Bigr (\displaystyle\sum_{k'=0}^{l} {\binom{l}{k'}}\lambda^{k'}T_{X^{k'}}\Bigr)^{t_{l}}}\Bigr)= \displaystyle \sum_{k=0}^{p-1}a_{k} \otimes X^{k} \]for $s_{0} \in \Z, s_{1}, \dots s_{p-1}, t_{1}, \dots t_{p-1} \in \N$.
This result is proved in [16, Proposition 3.5], however, for completeness, we provide the argument again. In particular, there exist $a_{0}, a_{1}, \dots a_{p-1} \in B$ such that
\[\Phi^{-1}(T_{X}^{s})= \displaystyle \sum_{k=0}^{p-1}a_{k} \otimes X^{k} \]
for $1 \leq s \leq p-1$.

Since any nonnegative integer $N$ is represented as $N=pm+r$ for some integer $m$ and some integer $r$ such that $0 \leq r < p$,

\[\Phi^{-1}({T_{X}}^{N})=\Phi^{-1}({T_{X}}^{pm+r})=({({T_{X}}^{p})}^{m} \otimes 1)\Phi^{-1}(T_{X}^{r}).\]

We now record the following useful identities:
\[X_{1}+ \lambda^{2} P_{(\lambda)}(T^{2}_{X})=\frac{(T_{1}+\lambda T_{X})^{2}}{T_{1}+2 \lambda T_{X}+ \lambda^{2} T_{X^{2}}},\]and 
 \[T_{1}^{s-1}+\displaystyle\sum_{k=2}^{s-1}{\binom{s}{k}}\lambda^{k}T_{1}^{s-k}P_{\lambda}(T^{k}_{X})+ \lambda^{s}P_{(\lambda)}(T^{s}_{X})=\frac{(T_{1}+\lambda T_{X})^{s}}{\displaystyle\sum_{k=0}^{s}{\binom{s}{k}}\lambda^{k}T_{X^{k}}}\]
 for $3 \leq s \leq p-1$,

Moreover, 

\[\Phi^{-1}\Big(\frac{1}{T_{1}+ \lambda T_{X}}\Big)=\Phi^{-1}\Big(\frac{({T_{1}+ \lambda T_{X}})^{p-1}}{({T_{1}+ \lambda T_{X}})^{p}}\Big)=\Big\{\frac{1}{({T_{1}+ \lambda T_{X}})^{p}} \otimes 1 \Big\}\Phi^{-1}\Big(({T_{1}+ \lambda T_{X}})^{p-1}\Big)\]
\[=\Bigr(\frac{1}{({T_{1}+ \lambda T_{X}})^{p}} \otimes 1 \Bigr)\Bigr\{\sum_{k=0}^{p-1}{\binom{p-1}{k}}\lambda^{k}(T^{p-k-1}_{1} \otimes 1)\Phi^{-1}(T^{k}_{X})\Bigr\}.\]

Hence, for any nonnegative integer $m$, 
there exist $a_{k} \in B$ for $0 \leq k \leq p-1$ such that
\[\Phi^{-1}\Big(\frac{1}{(T_{1}+\lambda T_{X})^{m}}\Big)=\sum_{k=0}^{p-1}(a_{k} \otimes 1)\Phi^{-1}(T^{k}_{X}).\]

For $2 \leq s \leq p-1$, 
\[\Phi^{-1}\Big(\frac{1}{\displaystyle\sum_{k=0}^{s}{\binom{s}{k}}\lambda^{k}T_{X^{k}}}\Big)=\Phi^{-1}\Big(\frac{(T_{1}+\lambda T_{X})^{s}}{\Big\{\displaystyle\sum_{k=0}^{s}{\binom{s}{k}}\lambda^{k}T_{X^{k}}\Big\}(T_{1}+\lambda T_{X})^{s}}\Big)\]
\[=\Big(T_{1}^{s-1}+\displaystyle\sum_{k=2}^{s-1}{\binom{s}{k}}\lambda^{k}T_{1}^{s-k}P_{\lambda}(T^{k}_{X})+ \lambda^{s}P_{(\lambda)}(T^{s}_{X}) \otimes 1\Big)\Phi^{-1}\Bigr(\frac{1}{(T_{1}+\lambda T_{X})^{s}}\Bigr).\]
Hence, 

for any nonnegative integer $m$, there exist elements $a_{k} \in B$ for $0 \leq k \leq p-1$ such that 
\[\Phi^{-1}\Big(\frac{1}{\Big\{\displaystyle\sum_{k=0}^{s}{\binom{s}{k}}\lambda^{k}T_{X^{k}}\Big\}^{m}}\Big)=\sum_{k=0}^{p-1}(a_{k} \otimes 1)\Phi^{-1}(T^{k}_{X}).\]
We also have 
\[P_{(\lambda)}(T^{2}_{X})=\frac{-T_{1}T_{X^{2}}+Q_{2}}{T_{1}+2\lambda T_{X} + \lambda^{2}T_{X^{2}}},\]
where
\[Q_{2} \in R[T_{1}^{\pm 1}, T_{X}, \frac{1}{T_{1}+\lambda T_{X}}].\]
This is proved in [16, Proposition 3.4].
Since 
\[T_{1}+ \lambda^{2} P_{(\lambda)}(T^{2}_{X})=\frac{(T_{1}+\lambda T_{X})^{2}}{T_{1}+2 \lambda T_{X}+ \lambda^{2} T_{X^{2}}},\] we deduce that
\[T_{X^{2}}=\frac{Q_{2}-(T_{1}+2 \lambda T_{X}) P_{(\lambda)}(T^{2}_{X})}{T_{1}+ \lambda^{2} P_{(\lambda)}(T^{2}_{X})}\]
Therefore, for any nonnegative integer $m$, there exist elements $a_{k} \in B$ for $0 \leq k \leq p-1$
such that
\[\Phi^{-1}(T^{m}_{X^{2}})=\sum_{k=0}^{p-1}(a_{k} \otimes 1)\Phi^{-1}(T^{k}_{X}).\]

Suppose that there there exist $a_{k} \in B$ for $0 \leq k \leq p-1$ such that
\[\Phi^{-1}(T_{X^{s}})=\sum_{k=0}^{p-1}(a_{k} \otimes 1)\Phi^{-1}(T^{k}_{X})\] 
 for $2 \leq s \leq p-2$. Then we have
\[P_{(\lambda)}(T^{s+1}_{X})=\frac{\Big\{-T^{s}_{1}-\displaystyle\sum_{k=2}^{s}{\binom{s+1}{k}\lambda^{k}T_{1}^{s+1-k}P_{(\lambda)}(T^{k}_{X})}\Big\}T_{X^{s+1}}+Q_{s+1}}{\displaystyle\sum_{k=0}^{s+1}{\binom{s+1}{k}}\lambda^{k}T_{X^{k}}},\]
where
\[Q_{s+1} \in R[T_{1}^{\pm 1}, T_{X}, \dots, T_{X^{s}}, \frac{1} {\displaystyle\prod_{l=1}^{s}\Big(\displaystyle\sum_{k=0}^{l}{\binom{l}{k}}\lambda^{k}T_{X^{k}}\Big)}]. \]
This is proved in [16, Proposition 3.4].
Hence, 
\[T^{s}_{1}+\sum_{k=2}^{s}{\binom{s+1}{k}\lambda^{k}T_{1}^{s+1-k}P_{(\lambda)}(T^{k}_{X})} + \lambda^{s+1} P_{(\lambda)}(T^{s+1}_{X})=\frac{(T_{1}+ \lambda T_{X})^{s+1}}{\displaystyle\sum_{k=0}^{s+1}{\binom{s+1}{k}}\lambda^{k}T_{X^{k}}},\]
Which implies
\[T_{X^{s+1}}=\frac{-\Big\{\displaystyle\sum_{k=0}^{s}{\binom{s+1}{k}}\lambda^{k}T_{X^{k}}\Big\}P(T^{s+1}_{X})+Q_{s+1}}{T^{s}_{1}+\displaystyle\sum_{k=2}^{s}{\binom{s+1}{k}\lambda^{k}T_{1}^{s+1-k}P_{(\lambda)}(T^{k}_{X})} + \lambda^{s+1} P_{(\lambda)}(T^{s+1}_{X})}.\]
Therefore, for any nonnegative integer $m$, there exist $a_{k} \in B$ for $0 \leq k \leq p-1$
such that
\[\Phi^{-1}(T^{m}_{X^{s+1}})=\sum_{k=0}^{p-1}(a_{k} \otimes 1)\Phi^{-1}(T^{k}_{X})\]
for any nonnegative integer $m$.

\[\Phi^{-1}({T_{Y}}^{N})=\Phi^{-1}({T_{Y}}^{pm+r})=({({T_{Y}}^{p})}^{m} \otimes 1)\Phi^{-1}(T_{Y}^{r}).\]

Since \[\rho(T_{X}^{r_{1}}T_{Y}^{r_{2}})=\Bigr(T_{X}\otimes 1+(T_{1}+\lambda T_{X}) \otimes X\Bigr)^{r_{1}}\Bigr(T_{Y} \otimes 1 + (T_{1}+ \lambda T_{X}) \otimes Y\Bigr)^{r_{2}}\]
for $0 \leq r_{1} \leq p-1, 0 \leq r_{2} \leq p-1$,
there exist $a_{i,j} \in B$ for $0 \leq i \leq p-1, 0 \leq j \leq p-1$ such that

\[\Phi^{-1}({T_{X}^{r_{1}}}{T_{Y}^{r_{2}})}=\displaystyle\sum_{0 \leq i \leq p-1, 0 \leq j \leq p-1}a_{i,j} \otimes X^{i}Y^{j}.\]

Since \[\varDelta(Y)=Y \otimes 1+(1+ \lambda X) \otimes Y,\]
we obtain \[T_{Y}\phi^{-1}(1) +(T_{1}+ \lambda T_{X})\phi^{-1}(Y)=0.\]
Therefore, 
\[ \phi^{-1} (Y)=-\frac{T_{Y} }{T_{1} (T_{1} + \lambda T_{X} )}.\]
Moreover, since \[\varDelta(X^{r_{1}}Y^{r_{2}})=\Bigr(X \otimes (1+ \lambda X) + 1 \otimes X \Bigr)^{r_{1}}\Bigr( Y \otimes 1 +( 1+ \lambda X) \otimes Y\Bigr)^{r_{2}},\]
for  $0 \leq r_{1} \leq p-1, 1 \leq r_{2} \leq p-1$,
we obtain inductively that
\[\phi^{-1}(X^{r_{1}}Y^{r_{2}})= \frac{T_{X^{r_{1}}Y^{r_{2}}}}{\Bigr(\displaystyle \sum_{k=0}^{r_{1}} \binom{r_{1}}{k}\lambda^{k}T_{X^{k}}\Bigr)\Bigr(\displaystyle \sum_{k=0}^{r_{1}+r_{2} (\text{mod} \ p)} \binom{r_{1}+ r_{2} (\text{mod} \ p)}{k}\lambda^{k}T_{X^{k}}\Bigr) }+Q_{r_{1}r_{2}},\]
where 
\[Q_{r_{1}r_{2}} \in R[T_{X^{l_{1}}Y^{l_{2}}}, \frac{1}{\varDelta}  ]_{0 \leq l_{1} \leq p-1, 0 \leq l_{2} \leq r_{2}-1}.\]

Therefore, if $r_{1} \neq 0$ or $r_{2} \neq 1$, we have 
\[P_{\lambda}(T_{X}^{r_{1}}T_{Y}^{r_{2}})=\frac{(T_{1}+\lambda T_{X})^{r_{1}+r_{2}}T_{X^{r_{1}} Y^{r_{2}}}}{ \displaystyle \sum_{k=0}^{r_{1}+r_{2} (\text{mod} \ p)} \binom{r_{1}+ r_{2} (\text{mod} \ p)}{k}\lambda^{k}T_{X^{k}}}+Q'_{r_{1}r_{2}}, \]
where 
\[Q'_{r_{1}r_{2}} \in R[T_{X^{l_{1}}Y^{l_{2}}}, \frac{1}{\varDelta}  ]_{0 \leq l_{1} \leq p-1, 0 \leq l_{2} \leq r_{2}-1}.\]

Hence, still under the assumption $r_{1} \neq 0$ or $r_{2} \neq 1$, it follows that
\[T_{X^{r_{1}} Y^{r_{2}}}=\frac{\Bigr(P_{\lambda}(T_{X}^{r_{1}}T_{Y}^{r_{2}})-Q'_{r_{1}r_{2}} \Bigr)\Bigr(  \displaystyle \sum_{k=0}^{r_{1}+r_{2} (\text{mod} \ p)} \binom{r_{1}+ r_{2} (\text{mod} \ p)}{k}\lambda^{k}T_{X^{k}}       \Bigr)}{(T_{1}+ \lambda T_{X})^{r_{1}+r_{2}}}\]

Therefore, there exist elements $a_{i,j} \in B$ for $0 \leq i \leq p-1, 0 \leq j \leq p-1$ such that

\[\Phi^{-1}(T_{X^{r_{1}} Y^{r_{2}}}^{m})=\displaystyle\sum_{0 \leq i \leq p-1, 0 \leq j \leq p-1}a_{i,j} \otimes X^{i}Y^{j}\]

It follows that the homomorphism of $R$-modules
\[ {\Phi}': B \otimes A_{R}^{(\lambda)}\rightarrow S(A_{R}^{(\lambda)})_{\Theta}  \] 
defined by\[ b \otimes a \mapsto b \phi (a)\]
is bijective.

Finally, considering the natural injection $s: B \rightarrow S(A_{R}^{(\lambda)})_{\Theta}^{\mathrm{co}A_{R}^{(\lambda)}}$,
we obtain the following commutative diagram. 

\[
  \xymatrix{
     B \otimes A_{R}^{(\lambda)}\ar[r]^{{ \ \ \ \  \Phi}'} \ar[d]_{s \otimes \text{Id}} &  S(A_{R}^{(\lambda)})_{\Theta}  \\
     {S(A_{R}^{(\lambda)})_{\Theta}}^{\mathrm{co}A_{R}^{(\lambda)}}  \otimes A_{R}^{(\lambda)}.  \ar[ur]^{\Phi} & 
  }
\]

Moreover, since $A_{R}^{(\lambda)}$ is faithfully flat over $R$, $s$ is bijective.

\end{proof}

\vspace{3mm}
\begin{prop} $S(A_{R}^{(\lambda)})_{\Theta}$ is isomorphic, as an $R$-algebra, to the polynomial algebra
$R[Z_{X^{r_{1}}Y^{r_{2}}}]_{0 \leq r_{1} \leq p-1, 0 \leq r_{2} \leq p-1}$ localized by the elements

\[\begin{split}
&Z_{1}, Z_{1}+\lambda Z_{X}, Z_{1}+ \lambda^{2}Z_{X^{{2}}}, \\
&Z_{1}^{s-1}+\sum_{k=2}^{s-1}\binom{s}{k}\lambda^{k}Z_{1}^{s-k}Z_{X^{k}}+\lambda^{s}Z_{X^{s}}, 3 \le s \le p-1.
\end{split}\]
\end{prop}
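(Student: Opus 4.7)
The strategy is to exhibit an explicit $R$-algebra isomorphism using the polynomial identities already derived in the proof of Proposition 4.2. First I would define an $R$-algebra homomorphism $\psi : R[Z_{X^{r_{1}}Y^{r_{2}}}]_{0\leq r_{1},r_{2}\leq p-1} \to S(A_{R}^{(\lambda)})_{\Theta}$ by
\[\psi(Z_{1})=T_{1},\quad \psi(Z_{X})=T_{X},\quad \psi(Z_{X^{s}})=P_{(\lambda)}(T_{X}^{s})\ \text{for}\ 2\leq s\leq p-1,\]
and $\psi(Z_{X^{r_{1}}Y^{r_{2}}})=T_{X^{r_{1}}Y^{r_{2}}}$ for $r_{2}\geq 1$. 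To see that $\psi$ extends to the prescribed localization, one checks that each listed element is sent to a unit of $S(A_{R}^{(\lambda)})_{\Theta}$. The first two elements $Z_{1}$ and $Z_{1}+\lambda Z_{X}$ go to $T_{1}$ and $T_{1}+\lambda T_{X}$, both of which are factors of $D$. For $2\leq s\leq p-1$, the identity
\[T_{1}^{s-1}+\sum_{k=2}^{s-1}\binom{s}{k}\lambda^{k}T_{1}^{s-k}P_{(\lambda)}(T_{X}^{k})+\lambda^{s}P_{(\lambda)}(T_{X}^{s})=\frac{(T_{1}+\lambda T_{X})^{s}}{\sum_{k=0}^{s}\binom{s}{k}\lambda^{k}T_{X^{k}}}\]
from Proposition 4.2 exhibits the $\psi$-image of the $s$-th listed element as a ratio of two factors of $D$, hence as a unit.

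Next I would construct a two-sided inverse $\psi^{-1}$ on generators of $S(A_{R}^{(\lambda)})_{\Theta}$. The assignments $T_{1}\mapsto Z_{1}$, $T_{X}\mapsto Z_{X}$, and $T_{X^{r_{1}}Y^{r_{2}}}\mapsto Z_{X^{r_{1}}Y^{r_{2}}}$ for $r_{2}\geq 1$ dispose of most generators. For $T_{X^{s}}$ with $2\leq s\leq p-1$ I would proceed by induction on $s$, using the formula
\[T_{X^{s+1}}=\frac{-\bigl(\sum_{k=0}^{s}\binom{s+1}{k}\lambda^{k}T_{X^{k}}\bigr)P_{(\lambda)}(T_{X}^{s+1})+Q_{s+1}}{T_{1}^{s}+\sum_{k=2}^{s}\binom{s+1}{k}\lambda^{k}T_{1}^{s-k+1}P_{(\lambda)}(T_{X}^{k})+\lambda^{s+1}P_{(\lambda)}(T_{X}^{s+1})}\]
recalled in Proposition 4.2, whose denominator is exactly $\psi$ of the $(s+1)$-th localization element and whose numerator lies in the subring already generated at stage $s$. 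The inverse of $D$ is then handled factor by factor. Checking $\psi\circ\psi^{-1}=\mathrm{Id}$ and $\psi^{-1}\circ\psi=\mathrm{Id}$ on generators finishes the argument.

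The hardest part will be the inductive bookkeeping: the polynomial $Q_{s+1}$ is built recursively from $T_{1},T_{X},\dots,T_{X^{s}}$ together with inverses of the factors $f_{1},\dots,f_{s}$ of $D$, so showing that $\psi^{-1}$ is a genuine ring homomorphism (rather than a set-theoretic substitution) requires propagating the induction through these nested denominators and verifying at each stage that no spurious relation is introduced. A cleaner alternative I would try in parallel is to observe that $\psi$ is visibly surjective once each $T_{X^{s}}$ has been produced from the $Z$'s, and that both sides are free modules of the same rank over the $R$-subalgebra generated by $Z_{1},Z_{X},Z_{X^{2}},\dots,Z_{X^{p-1}}$ with the listed elements inverted; a rank comparison then forces injectivity and yields the isomorphism without the full recursive unwinding.
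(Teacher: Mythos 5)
Your approach is correct and is essentially the paper's own: your $\psi$ is the paper's map $\xi$, the unit-checks rest on the same two identities from Proposition 4.2 expressing $T_{1}+\lambda^{2}P_{(\lambda)}(T_{X}^{2})$ and $T_{1}^{s-1}+\sum_{k=2}^{s-1}\binom{s}{k}\lambda^{k}T_{1}^{s-k}P_{(\lambda)}(T_{X}^{k})+\lambda^{s}P_{(\lambda)}(T_{X}^{s})$ as $(T_{1}+\lambda T_{X})^{s}/\sum_{k=0}^{s}\binom{s}{k}\lambda^{k}T_{X^{k}}$, and your inverse is the paper's $\chi$ built by the same recursion solving for $T_{X^{s+1}}$ (and since both algebras are localizations of polynomial rings, the ``genuine ring homomorphism'' worry you raise is automatic once the localized elements go to units, so the rank-comparison fallback is unnecessary). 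The only substantive deviation is that you send $Z_{X^{r_{1}}Y^{r_{2}}}$ with $r_{2}\geq 1$ to $T_{X^{r_{1}}Y^{r_{2}}}$ rather than to $P_{(\lambda)}(T_{X}^{r_{1}}T_{Y}^{r_{2}})$ as the paper does; this is harmless here because those variables are untouched by the localization, though the paper's choice is the one that later identifies the coinvariant subalgebra in Theorem 4.4.
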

\begin{proof}

Let $A_{\mathcal{W}}$ denote the polynomial algebra
$R[Z_{X^{r_{1}}Y^{r_{2}}}]_{0 \leq r_{1} \leq p-1, 0 \leq r_{2} \leq p-1}$ localized by the same elements listed above.
\[\begin{split}
&Z_{1}, Z_{1}+\lambda Z_{X}, Z_{1}+ \lambda^{2}Z_{X^{{2}}}, \\
&Z_{1}^{s-1}+\sum_{k=2}^{s-1}\binom{s}{k}\lambda^{k}Z_{1}^{s-k}Z_{X^{k}}+\lambda^{s}Z_{X^{s}}, 3 \le s \le p-1.
\end{split}\]

We define an $R$-algebras homomorphism
\[\chi:S(A_{R}^{(\lambda)})_{\Theta} \rightarrow  A_{\mathcal{W}}\]
By setting
\[\chi(T_{1})=Z_{1}, \chi(T_{X})=Z_{X},\]

\[\chi(T_{X^{2}})=\frac{\chi(Q_{2})-(Z_{1}+2 \lambda Z_{X}) Z_{X^{2}}}{Z_{1}+ \lambda^{2} Z_{X^{2}}},\]

\[\chi(T_{X^{s+1}})=\frac{-\Big\{\displaystyle\sum_{k=0}^{s}{\binom{s+1}{k}} \lambda^{k}\chi(T_{X^{k}})\Big\}Z_{X^{s+1}}+\chi(Q_{s+1})}{\chi(T^{s}_{1})+\displaystyle\sum_{k=2}^{s}{\binom{s+1}{k}\lambda^{k} (Z_{1}^{s+1-k}) Z_{X^{k}} + \lambda^{s+1}Z_{X^{s+1} }}} \]
and for $2 \leq s \leq p-2$,
\[\chi(T_{Y})=Z_{Y}\]

$\chi(T_{X^{r_{1}} Y^{r_{2}}})=$\[\frac{\Bigr(Z_{X^{r_{1}}Y^{r_{2}}}-\chi(Q'_{r_{1}r_{2}}) \Bigr)\Bigr(  \displaystyle \sum_{k=0}^{r_{1}+r_{2} (\text{mod} \ p)} \binom{r_{1}+ r_{2} (\text{mod} \ p)}{k}\lambda^{k}\chi(T_{X^{k}})       \Bigr)\Bigr(\displaystyle \sum_{k=0}^{r_{1}} \binom{r_{1}}{k}\lambda^{k}\chi(T_{X^{k}})\Bigr)}{(Z_{1}+ \lambda \chi(T_{X}))^{r_{1}+r_{2}}}\]

For mixed monomials, we define $0 \leq r_{1} \leq p-1, 2 \leq r_{2} \leq p-1$.

$\chi(T_{X^{r_{1}} Y^{r_{2}}})=$
\[\frac{\Bigr(Z_{X^{r_{1}}Y^{r_{2}}}-\chi(Q'_{r_{1}r_{2}}) \Bigr)\Bigr(  \displaystyle \sum_{k=0}^{r_{1}+r_{2} (\text{mod} \ p)} \binom{r_{1}+ r_{2} (\text{mod} \ p)}{k}\lambda^{k}\chi(T_{X^{k}})       \Bigr)\Bigr(\displaystyle \sum_{k=0}^{r_{1}} \binom{r_{1}}{k}\lambda^{k}\chi(T_{X^{k}})\Bigr)}{(Z_{1}+ \lambda \chi(T_{X}))^{r_{1}+r_{2}}}\]

for  $1 \leq r_{1} \leq p-1, r_{2}=1$.
 This is well-defined.

Indeed, we obtain 
\[(Z_{1}+2\lambda Z_{X}+\lambda^{2}\chi(T_{X^{2}}))Z_{X^{2}}=\chi(Q_{2})-Z_{1}\chi(T_{X^{2}})\]
since \[(Z_{1}+\lambda^{2}Z_{X^{2}})\chi(T_{X^{2}})=\chi(Q_{2})-(Z_{1}+2 \lambda Z_{X})Z_{X^{2}}.\]

Multiplying both sides by $\lambda^{2}$, we obtain
\[(Z_{1}+2\lambda Z_{X}+\lambda^{2}\chi(T_{X^{2}}))\lambda^{2}Z_{X^{2}}=(Z_{1}+\lambda Z_{X})^{2}-Z_{1}(Z_{1}+2\lambda Z_{X}+\lambda^{2}\chi(T_{X^{2}})).\]
Therefore,
\[(Z_{1}+2\lambda Z_{X}+\lambda^{2}\chi(T_{X^{2}}))(Z_{1}+\lambda^{2}Z_{X^{2}})=(Z_{1}+\lambda Z_{T})^{2}.\]

Hence, \[\frac{1}{(Z_{1}+2\lambda Z_{X}+\lambda^{2}\chi(T_{X^{2}}))}=\frac{Z_{1}+\lambda^{2}Z_{X^{2}}}{(Z_{1}+\lambda Z_{X})^{2}}.\]

For $2 \leq s \leq p-2$, a similar argument yields
\[\Big\{\sum_{k=0}^{s}{\binom{s+1}{k}} \lambda^{k}\chi(T_{X^{k}})+\lambda^{s+1}\chi(T_{X^{s+1}})\Big\}Z_{X^{s+1}}\]
\[=\chi(Q_{s+1})-\Big\{Z^{s}_{1}+\sum_{k=2}^{s}\binom{s+1}{k}\lambda^{k} (Z_{1}^{s+1-k}) Z_{X^{k}}\Big\}\chi(T_{X^{s+1}})\]
since \[\Big\{Z^{s}_{1}+\sum_{k=2}^{s}{\binom{s+1}{k}\lambda^{k} (Z_{1}^{s+1-k}) Z_{X^{k}} + \lambda^{s+1}Z_{X^{s+1}}}\Big\}Z_{X^{s+1}}\]\[=-\Big\{\sum_{k=0}^{s}{\binom{s+1}{k}} \lambda^{k}\chi(T_{X^{k}})\Big\}Z_{X^{s+1}}+\chi(Q_{s+1}).\]
Moreover, by multiplying both sides by $\lambda^{s+1}$, 
\[\Big\{\sum_{k=0}^{s}{\binom{s+1}{k}} \lambda^{k}\chi(T_{X^{k}})+\lambda^{s+1}\chi(T_{X^{s+1}})\Big\}\lambda^{s+1}Z_{X^{s+1}}\]
\[=(Z_{1}+\lambda Z_{X})^{s+1}-\Big\{Z^{s}_{1}+\sum_{k=2}^{s}\binom{s+1}{k}\lambda^{k} Z_{1}^{s+1-k} Z_{X^{k}}\Big\}\Big\{\sum_{k=0}^{s+1}\binom{s+1}{k}\lambda^{k}\chi(T_{X^{k}})\Big\}\]
Therefore,
\[\Big\{Z^{s}_{1}+\sum_{k=2}^{s}\binom{s+1}{k}\lambda^{k} (Z_{1}^{s+1-k}) Z_{X^{k}}+\lambda^{s+1}Z_{X^{s+1}}\Big\}\Big\{\sum_{k=0}^{s+1}\binom{s+1}{k}\lambda^{k}\chi(T_{X^{k}})\Big\}\]
\[=(Z_{1}+\lambda Z_{X})^{s+1}.\]
Thus,
\[\frac{1}{\displaystyle\sum_{k=0}^{s+1}\binom{s+1}{k}\lambda^{k}\chi(T_{X^{k}})} =\frac{(Z_{1}+\lambda Z_{X})^{s+1}}{Z^{s}_{1}+\displaystyle\sum_{k=2}^{s}\binom{s+1}{k}\lambda^{k} (Z_{1}^{s+1-k}) Z_{X^{k}}+\lambda^{s+1}Z_{X^{s+1}}}\]

Now define an $R$-algebras homomorphism
\[\xi: A_{\mathcal{W}} \rightarrow S(A_{R}^{(\lambda)})_{\Theta} \]
by
\[Z_{1} \mapsto T_{1}, Z_{X} \mapsto T_{X},\]

\[Z_{X^{s}} \mapsto P_{(\lambda)}(T^{s}_{X})\]
for $2 \leq s \leq p-1$,

\[Z_{Y} \mapsto T_{Y}\]

\[Z_{X^{r_{1}}Y^{r_{2}}} \mapsto P_{(\lambda)}(T_{X}^{r_{1}}T_{Y}^{r_{2}})\]

for  $1 \leq r_{1} \leq p-1, r_{2}=1$.

\[Z_{X^{r_{1}}Y^{r_{2}}} \mapsto P_{(\lambda)}(T_{X}^{r_{1}}T_{Y}^{r_{2}})\]

for  $0 \leq r_{1} \leq p-1, 2 \leq r_{2} \leq p-1$.

 This is well-defined.

Then $\xi \circ \chi = \mathrm{Id}$ and  $\chi \circ \xi = \mathrm{Id}$. Therefore, $\chi$ is bijective.

\end{proof}

\vspace{3mm}
\begin{theorem} 
$S(A_{R}^{(\lambda)})_{\Theta}^{\mathrm{co}A_{R}^{(\lambda)}}$ is isomorphic as an $R$-algebra to the polynomial algebra
$R[Z_{X^{r_{1}}Y^{r_{2}}}]_{0 \leq r_{1} \leq p-1, 0 \leq r_{2} \leq p-1}$ localized by the elements

\[\begin{split}
&Z_{1}, Z_{1}^{p}+\lambda^{p} Z_{X}, Z_{1}+ \lambda^{2}Z_{X^{{2}}}, \\
&Z_{1}^{s-1}+\sum_{k=2}^{s-1}\binom{s}{k}\lambda^{k}Z_{1}^{s-k}Z_{X^{k}}+\lambda^{s}Z_{X^{s}}, 3 \le s \le p-1.
\end{split}\]
\end{theorem}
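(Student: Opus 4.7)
The plan is to construct an $R$-algebra homomorphism $\xi'$ from the localized polynomial algebra in the statement onto $B := S(A_R^{(\lambda)})_\Theta^{\mathrm{co}A_R^{(\lambda)}}$ that refines the map $\xi$ of Proposition 4.4, and then to verify bijectivity by combining the generator list of Proposition 4.3 with the presentation in Proposition 4.4. Concretely, I would send $Z_1 \mapsto T_1$, $Z_X \mapsto T_X^p$, $Z_Y \mapsto T_Y^p$, $Z_{X^s} \mapsto P_{(\lambda)}(T_X^s)$ for $2 \le s \le p-1$, and $Z_{X^{r_1} Y^{r_2}} \mapsto P_{(\lambda)}(T_X^{r_1} T_Y^{r_2})$ for the remaining pairs $(r_1, r_2)$; the only change from $\xi$ is that $Z_X$ and $Z_Y$ now land on $T_X^p$ and $T_Y^p$, which are coinvariants in $B$.

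To extend $\xi'$ to the localized algebra, each denominator listed in the statement must map to a unit of $B$. The image of $Z_1$ is $T_1$, a unit of $B$ by Proposition 4.3. The image of $Z_1^p + \lambda^p Z_X$ is $T_1^p + \lambda^p T_X^p = (T_1 + \lambda T_X)^p$ in characteristic $p$, also inverted in $B$. For the remaining denominators $Z_1 + \lambda^2 Z_{X^2}$ and $Z_1^{s-1} + \sum_{k=2}^{s-1}\binom{s}{k}\lambda^k Z_1^{s-k} Z_{X^k} + \lambda^s Z_{X^s}$ (for $3 \le s \le p-1$), the arithmetic identities recorded in the proof of Proposition 4.3 give that the image equals $(T_1+\lambda T_X)^s / \sum_{k=0}^s \binom{s}{k}\lambda^k T_{X^k}$, whose reciprocal appears among the generators of $B$. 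Since by construction $\xi'$ hits every generator of $B$ enumerated in Proposition 4.3, it is surjective.

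For injectivity, compose $\xi'$ with the inclusion $B \hookrightarrow S(A_R^{(\lambda)})_\Theta$ and then with the isomorphism $\chi$ of Proposition 4.4. The composite acts on polynomial generators by $Z_X \mapsto Z_X^p$, $Z_Y \mapsto Z_Y^p$, and as the identity on the remaining $Z_{X^{r_1}Y^{r_2}}$. On the underlying polynomial ring this $p$-th power substitution sends the monomial $R$-basis injectively into itself; and after localization, each denominator of the source maps to a unit of $A_{\mathcal{W}}$ (notably $Z_1^p + \lambda^p Z_X \mapsto (Z_1+\lambda Z_X)^p$, a power of a unit), so the induced map on localizations remains injective. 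Hence $\xi'$ is injective, and therefore an isomorphism. The main obstacle lies in the second paragraph: every denominator must be matched to the correct generator of $B$ via the identities from the proof of Proposition 4.3, and the bookkeeping becomes delicate for the family indexed by $s$; once these identities are applied, surjectivity and injectivity both follow formally.
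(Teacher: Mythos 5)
Your proposal is correct and is essentially the paper's own argument: the paper proves the theorem by defining the substitution $\omega\colon A_{\mathcal V}\to A_{\mathcal W}$ (identity on all variables except $Z_X\mapsto Z_X^p$) and showing $\xi\circ\omega$ is injective with image $S(A_R^{(\lambda)})_\Theta^{\mathrm{co}A_R^{(\lambda)}}$, which is exactly your $\xi'$, with surjectivity read off from the generator list of Proposition 4.3 and injectivity reduced to the $p$-th power substitution on the localized polynomial ring via the isomorphism of Proposition 4.4. Your explicit assignment $Z_Y\mapsto T_Y^p$ (matching the generator $T_Y^p$ of the coinvariant algebra) is the intended reading of the paper's $\omega$, so the two proofs coincide.
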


\begin{proof}
Let $A_{\mathcal{V}}$ denote the polynomial algebra
$R[Z_{X^{r_{1}}Y^{r_{2}}}]_{0 \leq r_{1} \leq p-1, 0 \leq r_{2} \leq p-1}$ localized by the elements listed above.

\[\begin{split}
&Z_{1}, Z_{1}^{p}+\lambda^{p} Z_{X}, Z_{1}+ \lambda^{2}Z_{X^{{2}}}, \\
&Z_{1}^{s-1}+\sum_{k=2}^{s-1}\binom{s}{k}\lambda^{k}Z_{1}^{s-k}Z_{X^{k}}+\lambda^{s}Z_{X^{s}}, 3 \le s \le p-1.
\end{split}\]
We define an $R$-algebras homomorphism \[\omega: A_{\mathcal{V}} \rightarrow A_{\mathcal{W}} \]
by setting
\[Z_{1} \mapsto Z_{1}, Z_{X} \mapsto Z^{p}_{X},\]

\[Z_{X^{s}} \mapsto Z_{X^{s}},\]
for $2 \leq s \leq p-1$,

\[Z_{X^{r_{1}}Y^{r_{2}}} \mapsto Z_{X^{r_{1}}Y^{r_{2}}}\]

for $0 \leq r_{1} \leq p-1, 1 \leq r_{2} \leq p-1$.
This map is well-defined. Moreover, $\omega$ is injective. Since $\mathrm{Im} (\xi  \circ \omega)= S(A_{R}^{(\lambda)})_{\Theta}^{\mathrm{co}A_{R}^{(\lambda)}}$, the claim follows, and the proof is complete.

\end{proof}

\vspace{5mm}

\begin{rem}Let $R$ be an $\F_{p}$-algebra and $\lambda \in R$. Define the Frobenius map $F: \mathcal{G}^{(\lambda)}_{R} \rightarrow \mathcal{G}^{(\lambda^{p})}_{R}$ on the coordinate rings 
\[R[X, \frac{1}{1+{\lambda}^{p} X}] \rightarrow R[X, \frac{1}{1+\lambda X}]: X \mapsto X^{p}.\]
Put $\varGamma^{(\lambda)}_{R}=\Ker[F:\mathcal{G}^{(\lambda)}_{R} \rightarrow \mathcal{G}^{({\lambda}^{p})}_{R}]$. Then $\varGamma^{(\lambda)}_{R}$ is a commutative finite flat group scheme. In [14], the sculpture and embedding problems were studied for $\varGamma^{(\lambda)}_{R}$.  In [16], the quotient $U(\varGamma^{(\lambda)}_{R})/\varGamma^{(\lambda)}_{R}$ was calculated. Theorem 4.4 is a generalization of this result.
\end{rem}

\section{Acknowledgment}
The author thanks Editage for the English language editing.

\end{document}